\newtheorem{theorem}{Theorem}[section]
\newtheorem{corollary}[theorem]{Corollary}
\newtheorem{definition}{Definition}
\newtheorem{example}[theorem]{Example}
\newtheorem{lemma}[theorem]{Lemma}
\newtheorem{proposition}[theorem]{Proposition}
\numberwithin{equation}{section}
\DeclareMathOperator{\Ima}
\begin{document}
\title{Weak c-ideals of a Lie algebra}
\author{David A. TOWERS}
\address{Lancaster University\\
Department of Mathematics and Statistics \\
LA$1$ $4$YF Lancaster\\
ENGLAND}
\email{d.towers@lancaster.ac.uk}
\author{Zekiye CILOGLU}
\address{Suleyman Demirel University\\
Department of Mathematics\\
$32260$, Isparta, TURKEY}
\email{zekiyeciloglu07@gmail.com}
\thanks{$2000$ \textit{Mathematics Subject Classification.} $17$B$05$, $17$B$%
20$, $17$B$30$, $17$B$50$. }
\keywords{Weak C-Ideal; Frattini Ideal; Lie Algebras; Nilpotent; Solvable;
Supersolvable.}

\begin{abstract}
A subalgebra $B$ of a Lie algebra $L$ is called a weak c-ideal of $L$ if
there is a subideal $C$ of $L$ such that $L=B+C$ and $B\cap C\leq B_{L}$
where $B_{L}$ is the largest ideal of $L$ contained in $B.$ This is
analogous to the concept of weakly c-normal subgroups, which has been
studied by a number of authors. We obtain some properties of weak c-ideals
and use them to give some characterisations of solvable and supersolvable
Lie algebras. We also note that one-dimensional weak c-ideals are c-ideals.
\end{abstract}

\maketitle

\section{\textbf{INTRODUCTION}}
Throughout $L$ will denote a finite-dimensional Lie algebra over a field $F$. 
If $B$ is a subalgebra of $L$ we define $B_L$, the {\em core} (with respect to $L$) of $B$ to be the largest ideal of $L$ contained in $B$. We say that a subalgebra $B$ of $L$ is a {\em weak c-ideal} of $L$ if there is a subideal $C$ of $L$ such that $L = B + C$ and $B \cap C \leq B_L$. This is a generalisation of the concept of a c-ideal which was studied in \cite{c-ideal}. It is analogous to the concept of weakly c-normal subgroup as introduced by Zhu, Guo and Shum in \cite{Zhu et al}; this concept has since been further studied by a number of authors, including Zhong and Yang (\cite{zhong}), Zhong, Yang, Ma and Lin (\cite{zhong et al}), Tashtoush (\cite{tashtoush}) and Jehad (\cite{Jehad}) who called them c-subnormal subgroups. 
\par
The maximal subalgebras of a Lie algebra $L$ and their relationship to the structure of $L$ have been studied extensively. It is well known that $L$ is nilpotent if and only if every maximal subalgebra of $L$ is an ideal of $L$ (see \cite{barnes}). A further result is that if $L$ is solvable then every maximal subalgebra of $L$ has codimension one in $L$ if and only if $L$ is supersolvable (see \cite{Barnes}). In \cite{c-ideal} similar characterisations of solvable and supersolvable Lie algebras were obtained in terms of c-ideals. The purpose here is to generalise these results to ones relating to weak c-ideals.
\par
In section two we give some basic properties of weak c-ideals; in particular, it is shown that weak c-ideals inside the Frattini subalgebra of a Lie algebra $L$ are necessarily ideals of $L$. In section three we first show that all maximal subalgebras of $L$ are c-ideals of $L$ if and only if $L$ is solvable and that $L$ has a solvable maximal subalgebra that is a c-ideal if and only if $L$ is solvable. Unlike the corresponding results for c-ideals, it is necessary to restrict the underlying field to characteristic zero, as is shown by an example. Finally we have that if all maximal nilpotent subalgebras of $L$ are weak c-ideals, or if all Cartan subalgebras of $L$ are c-ideals and $F$ has characteristic zero, then $L$ is solvable.
\par
In section four we show that if $L$ is a solvable Lie algebra over a general field and every maximal subalgebra of each maximal nilpotent subalgebra of $L$ is a weak c-ideal of $L$ then $L$ is supersolvable. If each of the maximal nilpotent subalgebras of $L$ has dimension at least two then the assumption of solvability can be removed. Similarly if the field has characteristic zero and $L$ is not three-dimensional simple then this restriction can be removed. In the final section we see that every one-dimensional subalgebra is a weak c-ideal if and only if it is a c-ideal.. 
\par

If $A$ and $B$ are subalgebras of $L$ for which $L = A + B$ and $A \cap B = 0$ we will write $L = A \oplus B$. The ideals $L^{(k)}$ and $L^k$ are defined inductively by $L^{(1)} = L^1 = L$, $L^{(k+1)} = [L^{(k)},L^{(k)}]$, $L^{k+1} = [L,L^k]$ for $k \geq 1$. If $A$ is a subalgebra of $L$, the {\em centralizer} of $A$ in $L$ is $C_{L}(A) = \{ x \in L : [x, A] = 0\}$.  
\section{\textbf{PRELIMINARY RESULTS}}

We
first give some basic properties of weak c-ideals.

\begin{definition}
\label{p:1} Let $I$ be a subalgebra of $L.$ We call $I$ a subideal of $L$ if
there is a chain of subalgebras%
\begin{equation*}
I=I_{0}<I_{1}<\ ...<I_{n}=L,
\end{equation*}%
where $I_{j}$ is an ideal of $I_{j+1}$ for each $0\leq j\leq n-1.$
\end{definition}

\begin{definition}
A subalgebra $B$ of a Lie algebra $L$ is a weak $c$-$ideal$ of $L$ if there
exists a subideal $C$ of $L$ such that%
\begin{equation*}
L=B+C\text{ and }B\cap C\leq B_{L},
\end{equation*}%
where $B_{L}$, the core of $B,$ is the largest ideal of $L$ contained in $B.$
\end{definition}

\begin{definition}
A Lie algebra $L$ is called weak\ c-simple if $L$ does not contain any weak\
c-ideals except the trivial subalgebra and $L$ itself.
\end{definition}

\begin{lemma}
Let $L$ be a Lie algebra. Then the following statements hold:

(1)\ Let $B$ be a subalgebra of $L.$ If $B$ is a $c$-$ideal$ of $L$ then $B$
is a weak $c$-ideal of $L.$

(2)\ $L$ is $weak\ c$-$simple$ if and only if $L$ is simple.

(3)\ If $B$ is a $weak\ c$-$ideal$ of $L$ and $K$ is a subalgebra with $%
B\leq K\leq L$, then $B$ is a $weak\ c$-$ideal$ of $K.$

(4)\ If $I$ is an ideal of $L$ and $I\leq B,$ then $B$ is a $weak\ c$-$ideal$
of $L$ if and only if $B/I$ is a $weak\ c$-$ideal$ of $L/I.$
\end{lemma}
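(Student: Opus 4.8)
The plan is to handle the four parts in order, since each is short and parts (3) and (4) both reduce to the modular (Dedekind) law $X \cap (Y+Z) = (X \cap Y) + Z$, valid whenever $Z \leq X$. For (1), if $B$ is a c-ideal then by definition there is an \emph{ideal} $C$ of $L$ with $L = B + C$ and $B \cap C \leq B_L$; since every ideal is in particular a subideal (a chain of length one), the same $C$ witnesses that $B$ is a weak c-ideal, and nothing further is needed. For (2), the reverse implication is the substantive direction: if $L$ is not simple it has a proper nonzero ideal $I$, and taking $C = L$ gives $L = I + C$ with $I \cap C = I \leq I_L = I$, so $I$ is a nontrivial weak c-ideal, contradicting weak c-simplicity. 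Conversely, if $L$ is simple I would show that its only subideals are $0$ and $L$: in a chain $C = I_0 \triangleleft \cdots \triangleleft I_n = L$ the term $I_{n-1}$ is an ideal of $L$, hence $0$ or $L$, and descending the chain forces $C \in \{0, L\}$. If the witnessing subideal is $C = 0$ then $B = L$; if $C = L$ then $B = B \cap C \leq B_L$, so $B$ is an ideal and thus $0$ or $L$. Hence every weak c-ideal is trivial.

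For (3), set $C' = C \cap K$, where $C$ is a subideal witnessing that $B$ is a weak c-ideal of $L$. Intersecting the chain $C = I_0 \triangleleft \cdots \triangleleft I_n = L$ with $K$ term by term yields $C \cap K = I_0 \cap K \triangleleft \cdots \triangleleft I_n \cap K = K$, using $[I_{j+1} \cap K, I_j \cap K] \subseteq [I_{j+1}, I_j] \cap K \subseteq I_j \cap K$, so $C'$ is a subideal of $K$. Since $B \leq K$, the modular law gives $K = K \cap (B + C) = B + (C \cap K)$, and $B \cap C' = B \cap C \cap K \leq B_L \leq B_K$, the last inclusion holding because $B_L$ is an ideal of $L$ contained in $B \leq K$, hence an ideal of $K$ contained in $B$.

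For (4), I would first record that $I \leq B_L$ (as $I$ is an ideal of $L$ lying in $B$) and that $(B/I)_{L/I} = B_L/I$, by the correspondence between ideals of $L/I$ and ideals of $L$ containing $I$. For the forward direction, take $C$ witnessing the weak c-ideal property of $B$ and use $\bar{C} = (C+I)/I$: the images $(I_j + I)/I$ form a subideal chain in $L/I$, so $\bar{C}$ is a subideal; $L/I = B/I + \bar{C}$ follows from $L = B + C$; and by the modular law $B \cap (C+I) = (B \cap C) + I \leq B_L$, whence $(B/I) \cap \bar{C} \leq B_L/I = (B/I)_{L/I}$. The reverse direction is the mirror image: a witnessing subideal of $L/I$ has the form $D/I$ with $I \leq D$, its preimage $D$ is a subideal of $L$ by the correspondence theorem, and $L = B + D$ together with $B \cap D \leq B_L$ follow on pulling back the corresponding relations in $L/I$.

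I expect the only real friction to lie in the bookkeeping that the subideal property is stable under the operations used — intersection with $K$ in (3) and passage to and from the quotient by $I$ in (4) — together with the identification $(B/I)_{L/I} = B_L/I$. Once these are in place, each remaining equality is a direct application of the modular law, so the argument is routine rather than delicate.
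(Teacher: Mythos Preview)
Your proposal is correct and follows essentially the same route as the paper's proof: in each part you use the same witnessing subideal (or its intersection with $K$, or its image in $L/I$) and the same modular-law computations. You supply a bit more justification than the paper does (e.g., explicitly noting $B_L \leq B_K$ and $(B/I)_{L/I} = B_L/I$, and in (2) spelling out why a simple algebra has only trivial subideals), but the arguments are otherwise identical.
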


\begin{proof}
(1)\ By the definition every ideal is a $c$-ideal and every $c$-ideal is a
weak\ c-ideal so the proof is obvious.

(2) Suppose first that $L$ is simple and let $B$ be a weak\ c-ideal with 
$B\neq L$. Then 
\begin{equation*}
L=B+C\text{ and }B\cap C\leq B_{L}
\end{equation*}
where $C$ is a subideal of $L.$ But, since $L$ is simple, $B_{L}$ must be $%
0.\ $Moreover, $C\neq 0$ so $C=L$. Hence $B=0$ and $L$ is weak $c$-simple.

Conversely, suppose $L$ is weak\ c-simple. Then, since every ideal of $L$
is a weak\ c-ideal, $L$ must be simple.

(3) If $B$ is a weak\ c-ideal of $L$ then there exists a subideal $C$ of 
$L$ such that%
\begin{equation*}
L=B+C\text{ and }B\cap C\leq B_{L}
\end{equation*}%
Then $K=K\cap L=K\cap \left( B+C\right) =B+\left( K\cap C\right) .$ Since $C$
is a subideal of $L$ there exists a chain of subalgebras%
\begin{equation*}
C=C_{0}<C_{1}<\ ...<C_{n}=L
\end{equation*}%
where $C_{j}$ is an ideal of $C_{j+1}$ for each $0\leq j\leq n-1.$ If we
intersect this chain with $K$ we get 
\begin{equation*}
C\cap K=C_{0}\cap K<C_{1}\cap K<\ ...<C_{n}\cap K=L\cap K=K
\end{equation*}%
and obviously $C_{j}\cap K$ is an ideal of $C_{j+1}\cap K$ for each $0\leq
j\leq n-1.$ Hence $C\cap K$ is a subideal of $K.$ Also,%
\begin{equation*}
B\cap \left( C\cap K\right) \leq B_{K}
\end{equation*}%
so that $B$ is a weak\ c-ideal of $L.$

(4) Suppose first that $B/I$ is a weak\ c-ideal of $L/ I.$ Then there
exists a subideal $C/ I$ of $L/ I$ such that%
\begin{equation*}
L/ I=B/ I+C/ I\text{ and }B/ I\cap C/ I\leq \left( B/ I\right) _{L/
I}=B_{L}/ I
\end{equation*}%
It follows that $L=B+C$ and $B\cap C\leq B_{L}$ where $C$ is a subideal of $%
L.$

Suppose conversely that $I$ is an ideal of $L$ with $I\leq B$ and $B$ is a 
weak\ c-ideal of $L.$ Then there exists a $C$ subideal of $L$ such that%
\begin{equation*}
L=B+C\text{ and }B\cap C\leq B_{L}.
\end{equation*}%
Since $I$ is an ideal and $I\leq B$ the factor algebra%
\begin{equation*}
L/ I=\left( B+C\right) / I=B/ I+\left( C+I\right) / I
\end{equation*}%
where $\left( C+I\right) / I$ is a subideal of $L/ I$ and%
\begin{equation*}
\left( B/ I\right) \cap \left( C+I\right) / I=(B\cap \left( C+I\right) )/
I=(I+B\cap C)/ I\leq B_{L}/ I=\left( B/ I\right) _{L/ I}
\end{equation*}%
so $B/ I$ is a $weak\ c$-$ideal$ of $L/ I.$
\end{proof}

The $Frattini\ subalgebra$\ of $L,\ F\left( L\right) ,\ $is the intersection
of all of the maximal subalgebras of $L.$ The $Frattini\ ideal,\ \phi (L),\ $%
of $L$ is $F\left( L\right) _{L}.$ The next result is a generalisation of
\cite[Proposition 2.2]{c-ideal}. The same proof works but we will include it for completeness.

\begin{proposition}
Let $B,C$ be subalgebras of $L$ with $B\leq F(C).$ If $B$ is a $weak\ c$-$%
ideal$ of $L$ then $B$ is an ideal of $L$ and $B\leq \phi (L).$
\end{proposition}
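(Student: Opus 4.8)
The plan is to prove the statement by using the weak c-ideal decomposition of $B$ to locate $B$ inside $\phi(L)$, and then to invoke the standard fact that everything in the Frattini ideal behaves like an ideal via a modular-law argument.

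Let me sketch the proof. Since $B$ is a weak c-ideal of $L$, there is a subideal $C$ of $L$ with $L = B + C$ and $B \cap C \leq B_L$. The first step is to understand where $B$ sits relative to $C$. We have $B \leq F(C)$, the Frattini subalgebra of $C$, so in particular $B \leq C$. If $C$ were a proper subalgebra of $L$, then $L = B + C = C$ since $B \leq C$, a contradiction; hence we must have $C = L$. This is the decisive simplification: the subideal $C$ is forced to be all of $L$, so the decomposition $L = B + C$ gives no extra splitting information, and instead $B \cap C = B \cap L = B \leq B_L$.

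From $B \leq B_L$ we get $B = B_L$, so $B$ is an ideal of $L$; this establishes the first conclusion. The remaining task is to show $B \leq \phi(L) = F(L)_L$. Since $B$ is now an ideal, it suffices to show $B \leq F(L)$, i.e. $B$ lies in every maximal subalgebra of $L$. Suppose not; then there is a maximal subalgebra $M$ of $L$ with $B \not\leq M$, whence $L = B + M$ by maximality. Here I would use the hypothesis $B \leq F(C) = F(L)$: since $B$ lies in the Frattini subalgebra of $L$, it is a ``non-generator'' of $L$, so $L = B + M$ forces $L = M$, contradicting maximality. Thus $B \leq F(L)$, and since $B$ is an ideal, $B \leq F(L)_L = \phi(L)$.

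The main point to get right is the non-generator property of the Frattini subalgebra: an element (or subalgebra) contained in $F(L)$ can be deleted from any generating set, so if $L = B + M$ with $B \leq F(L)$ and $M$ maximal then $L = M$. This is the Lie-algebra analogue of the group-theoretic fact that the Frattini subgroup consists of non-generators, and it is exactly the ingredient that converts ``$B$ is an ideal lying in every maximal subalgebra'' into ``$B \leq \phi(L)$.'' I expect the only genuine subtlety to be the clean deduction $C = L$ from $B \leq F(C) \leq C$; once that collapse is in hand, the ideal conclusion is immediate and the Frattini containment is routine. Since the excerpt remarks that ``the same proof works'' as in the c-ideal case, I am confident this is the intended route.
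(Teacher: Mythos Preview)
Your argument conflates two different objects that happen to share the name $C$. In the proposition, $C$ is a \emph{given} subalgebra of $L$ with $B\leq F(C)$; it has nothing a priori to do with the weak c-ideal decomposition. The subideal furnished by the weak c-ideal property is a separate object --- call it $K$ --- satisfying $L=B+K$ and $B\cap K\leq B_L$. You cannot infer $B\leq K$ (let alone $B\leq F(K)$) from the hypothesis $B\leq F(C)$, so the collapse ``the subideal must equal $L$'' is unjustified. With that step gone, neither $B=B_L$ nor your later claim $F(C)=F(L)$ follows.

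The paper keeps the two objects separate. Intersecting $L=B+K$ with the given subalgebra $C$ yields $C=B+(C\cap K)$, and the non-generator property is applied \emph{inside $C$} (which is where the hypothesis $B\leq F(C)$ actually lives), giving $C=C\cap K$, i.e.\ $C\leq K$. Then $B\leq F(C)\leq C\leq K$ forces $B=B\cap K\leq B_L$, so $B$ is an ideal of $L$. For the final containment one cannot simply assert $B\leq F(C)=F(L)$, since $C$ need not be $L$; instead one invokes the lemma from \cite{[7]} that an ideal of $L$ contained in $F(C)$ for some subalgebra $C$ automatically lies in $\phi(L)$. Your instinct to use the Frattini non-generator property is exactly right --- it just has to be used in $C$ rather than in $L$.
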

\begin{proof} Suppose that $L = B + K$ where $K$ is a subideal of $L$ and $B \cap K \leq B_L$. Then $C = C \cap L = C \cap (B + K) = B + C \cap K = C \cap K$ since $B \leq F(C)$. Hence $B \leq C \leq K$, giving $B = B \cap K \leq B_L$ and $B$ is an ideal of $L$. It then follows from \cite[Lemma 4.1]{[7]} that $B \leq \phi(L)$.
\end{proof}

An ideal $A$ is {\it complemented} in $L$ if there is a subalgebra $U$ of $L$ such that $L=A+U$
and $A\cap U=0.$  We adapt this to define a
complemented weak\ c-ideal as follows.

\begin{definition}
Let $L$ be a Lie algebra and $B$ is a $weak\ c$-$ideal$ of $L.$ A $weak\ c$-$%
ideal$ $B$ is complemented in $L$ if there is a subideal $C$ of $L$ such
that $L=B+C$ and $B\cap C=0.$
\end{definition}

Then we can give the following lemma:

\begin{lemma}
If $B$ is a $weak\ c$-$ideal$ of a Lie algebra $L$, then $B/ B_{L}$ has a
subideal complement in $L/ B_{L},\ i.e.,\ $there exists a \ subideal
subalgebra $C/ B_{L}$ of $L/ B_{L}$ such that $L/ B_{L}$is semidirect sum of 
$C/ B_{L}$ and $B/ B_{L}$. Conversely, if $B$ is a subalgebra of $L$ such
that $B/ B_{L}$ has a subideal complement in $L/ B_{L}$ then $B$ is a $weak\
c$-$ideal$ of $L $.
\end{lemma}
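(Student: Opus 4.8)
The plan is to prove the two implications by pushing the witnessing subideal through the canonical projection $\pi\colon L\to L/B_L$ in the forward direction and pulling it back in the converse, the only substantive point being that both $\pi$ and its inverse respect the subideal relation.

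For the forward direction I would begin with a subideal $C$ of $L$ satisfying $L=B+C$ and $B\cap C\leq B_L$, and propose $(C+B_L)/B_L$ as the desired complement. First I would verify that $(C+B_L)/B_L$ is a subideal of $L/B_L$: starting from a chain $C=C_0\triangleleft C_1\triangleleft\cdots\triangleleft C_n=L$ and adjoining $B_L$ to every term, I would check that $(C_j+B_L)/B_L\triangleleft(C_{j+1}+B_L)/B_L$, which comes down to observing that each cross term $[C_{j+1},B_L]$, $[B_L,C_j]$, $[B_L,B_L]$ lands in $B_L$ because $B_L$ is an ideal of $L$, while $[C_{j+1},C_j]\subseteq C_j$. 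Since $B_L\leq B$, the sum $L/B_L=(B+C)/B_L=B/B_L+(C+B_L)/B_L$ is immediate, and for the intersection the modular law gives $B\cap(C+B_L)=(B\cap C)+B_L=B_L$, using $B\cap C\leq B_L$; hence $(B/B_L)\cap\big((C+B_L)/B_L\big)=0$ and the complement is genuine.

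For the converse I would write the given subideal complement as $D/B_L$, where $D$ is a subalgebra of $L$ with $B_L\leq D$. Applying the correspondence theorem to a chain for $D/B_L$ shows that $D$ is itself a subideal of $L$. Then $(B+D)/B_L=L/B_L$ yields $L=B+D$, and $(B\cap D)/B_L=(B/B_L)\cap(D/B_L)=0$ together with $B_L\leq B\cap D$ yields $B\cap D=B_L\leq B_L$; so taking $C=D$ exhibits $B$ as a weak c-ideal.

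The sum and intersection identities are routine consequences of the modular law and the inclusion $B_L\leq B$. The step I expect to require the most care, and hence the main obstacle, is the subideal correspondence in both directions: that $(C+B_L)/B_L$ inherits the subideal property from $C$, and that a subideal of $L/B_L$ of the form $D/B_L$ lifts to a subideal $D$ of $L$. Both hinge on $B_L$ being an ideal of $L$, which is exactly what makes the bracket computation close up at each link of the chain.
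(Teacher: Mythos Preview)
Your proposal is correct and follows essentially the same route as the paper: in the forward direction you take the witnessing subideal $C$, form $(C+B_L)/B_L$, and check sum and intersection via the modular law; in the converse you pull the subideal complement back along the quotient map. If anything, you are more careful than the paper, which silently assumes both that $(C+B_L)/B_L$ inherits the subideal property and that a subideal of $L/B_L$ lifts to a subideal of $L$, whereas you explicitly flag and verify these correspondence steps.
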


\begin{proof}
Let $B$ be a weak\ c-ideal of $L$. Then there exists a subideal $C$ of $%
L $ such that $B+C=L$ and $B\cap C\leq B_{L}.$ If $B_{L}=0$ then $B\cap C=0$
and so that $C$ is a subideal complement of $B$ in $L.$ Assume that $%
B_{L}\neq 0,$ then we can construct the factor algebras $B/ B_{L}$ and $%
\left( C+B_{L}\right) / B_{L}.$ If we intersect these two factor algebras we have
\begin{eqnarray*}
\frac{B}{ B_{L}} \cap \frac{C+B_{L}}{ B_{L}} &=&\frac{B\cap \left(
C+B_{L}\right)}{ B_{L}} \\
&=&\frac{B_{L}+(B\cap C)}{ B_{L}} \\
&= &\frac{B_{L}}{ B_{L}}=0
\end{eqnarray*}%
Hence, $(C+B_{L})/ B_{L}$ is a subideal complement of $B/ B_{L}$ in $L/
B_{L}.$ Conversely, if $K$ is a subideal of $L$ such that $K/ B_{L}$ is a
subideal complement of $B/ B_{L}$ in $L/ B_{L}$ then we have that%
\begin{equation*}
L/ B_{L}=\left( B/ B_{L}\right) +\left( K/ B_{L}\right) \text{ and }\left(
B/ B_{L}\right) \cap \left( K/ B_{L}\right) =0
\end{equation*}%
Then $L=B+K$ and $B\cap K\leq B_{L}.$ Therefore $B$ is a weak c-ideal
of $L.$
\end{proof}

\section{\textbf{SOME CHARACTERISATIONS OF SOLVABLE ALGEBRAS}}

We will use the following Lemma which is due to Stewart (\cite[Lemma 4.2.5]%
{stewart}).

\begin{lemma}
\label{l:stewart} Let $L$ be a Lie algebra over any field having two
subideals $H$ and $K$ such that $K$ is simple and not abelian. Suppose that $%
H\cap K=0$. Then $[H,K]=0$.
\end{lemma}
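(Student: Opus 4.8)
The plan is to exploit the single structural fact that distinguishes $K$: being simple and non-abelian, it is \emph{perfect}, i.e. $K=[K,K]$. The whole argument splits into two phases. In the first I show the containment $[H,K]\subseteq K$; in the second I use this to pass to the subalgebra $J=H+K$, in which $K$ becomes an honest ideal, and finish by an induction on the defect of $H$. Throughout I write $[X,{}_{0}Y]=X$ and $[X,{}_{k+1}Y]=[[X,{}_{k}Y],Y]$ for the left-normed brackets, and I let $d$ denote the defect of $K$ in $L$, so that $K=K_{0}\lhd K_{1}\lhd\cdots\lhd K_{d}=L$.

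For the first phase I would isolate two opposing inclusions. On the one hand, perfectness together with the Jacobi identity gives $[H,K]=[H,[K,K]]\subseteq[[H,K],K]=[H,{}_{2}K]$; bracketing repeatedly with $K$ then yields the ascending chain $[H,K]\subseteq[H,{}_{2}K]\subseteq[H,{}_{3}K]\subseteq\cdots$, so that $[H,K]\subseteq[H,{}_{k}K]$ for every $k$. On the other hand, an easy induction on $d$ shows that a subideal of defect $d$ satisfies $[L,{}_{d}K]\subseteq K$ (the inductive step uses $[L,K]\subseteq K_{d-1}$ followed by the hypothesis applied to $K$ inside $K_{d-1}$). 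Combining the two, $[H,K]\subseteq[H,{}_{d}K]\subseteq[L,{}_{d}K]\subseteq K$. This interplay, in which perfectness drives the commutator arbitrarily deep while the defect drags deep commutators back into $K$, is the crux of the proof and the step I expect to be the main obstacle to get cleanly; it is also what makes the statement work \emph{over any field}, since it avoids any appeal to derivations of $K$ being inner.

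With $[H,K]\subseteq K$ in hand, the inclusion $[H+K,H+K]\subseteq H+K$ holds, so $J=H+K$ is a subalgebra, and $[J,K]=[H,K]+[K,K]\subseteq K$ shows $K\lhd J$. Intersecting a subideal chain for $H$ in $L$ with $J$ (exactly as in part (3) of the Lemma) shows $H$ is a subideal of $J$, and $H\cap K=0$ persists. I have thus reduced to proving the following self-contained claim: if $K\lhd J$ is simple non-abelian and $H$ is a subideal of $J$ with $H\cap K=0$, then $[H,K]=0$.

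I would prove this claim by induction on the defect $e$ of $H$ in $J$, writing $H=J_{0}\lhd J_{1}\lhd\cdots\lhd J_{e}=J$ so that $J_{e-1}\lhd J$. Since $K\cap J_{e-1}$ is an ideal of the simple algebra $K$, either $K\cap J_{e-1}=0$ or $K\subseteq J_{e-1}$. In the first case $K$ and $J_{e-1}$ are ideals of $J$ meeting in $0$, hence $[H,K]\subseteq[J_{e-1},K]\subseteq K\cap J_{e-1}=0$. In the second case $K\lhd J_{e-1}$ and $H$ is a subideal of $J_{e-1}$ of strictly smaller defect, so the inductive hypothesis applied inside $J_{e-1}$ gives $[H,K]=0$; the base case $e=1$ falls under the first alternative. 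This completes the argument.
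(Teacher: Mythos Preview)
Your argument is correct. The paper, however, does not supply a proof of this lemma: it merely quotes it from Stewart's thesis \cite{stewart}, so there is no in-paper proof to compare against.

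Your strategy is essentially the standard one and, as far as I can tell, the one Stewart uses. The key idea you have identified---that perfectness of $K$ forces $[H,K]\subseteq[H,{}_kK]$ for all $k$, while subideality of $K$ forces $[L,{}_dK]\subseteq K$---is exactly the leverage needed to obtain $[H,K]\subseteq K$, and this is indeed the heart of the matter. Once that is known, your reduction to the case $K\lhd J=H+K$ and the subsequent induction on the defect of $H$ (splitting on whether $K\cap J_{e-1}$ is $0$ or $K$) is clean and complete. Two minor points of presentation you could sharpen: the chain $[H,K]\subseteq[H,{}_2K]\subseteq\cdots$ follows because for \emph{any} subspace $X$ one has $[X,K]=[X,[K,K]]\subseteq[[X,K],K]$, so the inclusion propagates; and in the induction the case $e=0$ forces $K\subseteq H$ and hence $K=K\cap H=0$, which is vacuous, so $e\ge 1$ and your base case is indeed $e=1$.
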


\begin{theorem}
Let $L$ be a Lie-algebra over a field $F$ of characteristic zero and let $B$
be an ideal of $L$. Then $B$ is solvable if and only if every maximal
subalgebra of $L$ not containing $B$ is a weak $c$-ideal of $L.$
\end{theorem}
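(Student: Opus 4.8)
The plan is to prove both directions, handling the forward direction (solvability of $B$ implies every maximal subalgebra not containing $B$ is a weak c-ideal) and the harder converse separately. For the forward direction, I would take a maximal subalgebra $M$ of $L$ with $B \not\leq M$. Since $B$ is an ideal and $M$ is maximal, $L = M + B$, so $M$ has codimension at most $\dim B$ in a controlled way; more usefully, $B \not\leq M$ forces $M + B = L$ by maximality. I would then attempt to produce a subideal $C \leq B$ with $L = M + C$ and $M \cap C \leq M_L$. Since $B$ is a solvable ideal, it has a nice chain of ideals of $L$ (or at least subideals), and I would exploit the derived series of $B$ to locate a minimal piece of $B$ that together with $M$ spans $L$. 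The solvability of $B$ should guarantee that the relevant intersection collapses into the core $M_L$.

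For the converse, which I expect to be the main obstacle, I would argue by contradiction or by induction on $\dim L$, aiming to show $B$ is solvable assuming every maximal subalgebra not containing $B$ is a weak c-ideal. First I would pass to a minimal ideal or a chief factor: if $B$ is not solvable, then $B$ has a non-solvable part, and in characteristic zero the radical and Levi decomposition are available, so $B$ contains a semisimple (hence a simple non-abelian) ideal or subideal. The strategy is to derive a contradiction from the existence of such a simple non-abelian subideal. I would choose a maximal subalgebra $M$ with $B \not\leq M$ positioned to ``cut'' the simple part, use the weak c-ideal hypothesis to obtain a subideal $C$ with $L = M + C$ and $M \cap C \leq M_L$, and then apply Lemma \ref{l:stewart}: if $K$ is a simple non-abelian subideal sitting inside $B$ and $C$ is the subideal complement, the condition $M \cap C \leq M_L$ should be arranged so that some subideal meets $K$ trivially, forcing $[H,K] = 0$ and thereby constraining $K$ into an abelian or central position, contradicting non-abelianness.

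The key technical steps I anticipate are: (i) reducing to the case where $L$ has a unique minimal ideal or working modulo $\phi(L)$ using Proposition \ref{p:1} and Lemma 2.4 to strip away Frattini contributions; (ii) using the factor-algebra transfer in part (4) of the first Lemma to induct, so that if $I$ is a minimal ideal contained in $B$ with $B/I$ solvable by induction, solvability of $B$ follows once $I$ is shown solvable; and (iii) the crucial application of Stewart's Lemma to eliminate simple non-abelian subideals. The characteristic-zero hypothesis enters precisely through the Levi decomposition, which lets me split off a semisimple subalgebra and locate a simple non-abelian ideal of the semisimple part to feed into Stewart's Lemma.

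The hardest part will be engineering the maximal subalgebra $M$ and its subideal complement $C$ so that Stewart's Lemma applies cleanly: I must ensure that the simple non-abelian subideal $K$ and some subideal arising from $C$ intersect trivially while not centralising each other, yielding the contradiction. Managing the core condition $M \cap C \leq M_L$ and translating it into a trivial-intersection statement between subideals is where the real work lies, and I expect to need the observation that a maximal subalgebra $M$ with core $M_L$ satisfies $M/M_L$ being core-free in $L/M_L$, allowing a reduction to the primitive case where the structure of $L$ is tightly controlled.
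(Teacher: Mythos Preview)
Your plan is correct and matches the paper's proof almost step for step: the forward direction uses the derived series of $B$ to find $B^{(k)}$ with $L=M+B^{(k)}$ and $M\cap B^{(k)}\leq M_L$, and the converse proceeds by a minimal counter-example, reduces to a monolithic $L$ with non-abelian (hence, in characteristic zero, simple) monolith $A\subseteq B$, passes to a core-free maximal $M$ (primitive situation with $C_L(A)=0$), and then applies Stewart's Lemma to the subideal $C$ and $A$. The only small recalibration is the endgame: Stewart gives $[C,A]=0$, so it is $C$ that is forced to be $0$ (via $C\subseteq C_L(A)=0$), yielding $L=M$, rather than $A$ being forced abelian.
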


\begin{proof}
Suppose every maximal subalgebra of $L$ not containing $B$ is a weak $c$%
-ideal of $L$. Then we need to show $B$ is solvable. Assume that this is
false and let $L$ be a minimal counter-example. Let $A$ be a minimal ideal
of $L$ and assume that $M/A$ is a maximal subalgebra of $L/A$ such that $%
(B+A)/A \not \subseteq M/A$. Then $M$ is a maximal subalgebra of $L$ with $B
\not \subseteq M$, so $M$ is a weak $c$-ideal of $L$. It follows that $M/A$
is a weak $c$-ideal of $L/A$, and hence that $(B+A)/A$ is solvable. If $%
B\cap A=0$, then $B\cong B/B\cap A \cong (B+A)/A$ is solvable. So we can
assume that every minmal ideal of $L$ is contained in $B$. Moreover, $B/A$
is solvable for each such minimal ideal. If $L$ has two distinct minimal
ideals $A_1$ and $A_2$ then $B\cong B/A_1\cap A_2$ is solvable, so $L$ is
monolithic with monolith $A$, say.

If $A$ is abelian then $B$ is solvable, so we must have that $A$ is simple.
Clearly, $B\not \subseteq \phi(L)$, since $\phi(L)$ is nilpotent, so there
is a maximal subalgebra $M$ of $L$ such that $B\not \subseteq M$. Then $M$
must be a weak $c$-ideal of $L$, so there is a subideal $C$ of $L$ such that 
$L=M+C$ and $M\cap C\subseteq M_L$. Since $B\not \subseteq M_L$ we have that 
$M_L=0$. It follows that $L$ is primitive of type $2$ and hence that $%
C_L(A)=0$, by \cite[Theorem 1.1]{prim}. But $[C,A]=0$ by Lemma \ref%
{l:stewart}, so $C=0$, a contradiction. Hence $B$ is solvable.

So suppose now that $B$ is solvable and let $M$ be a maximal ideal of $L$
not containing $B$. Then there exists $k\in {\mathbb{N}}$ such that $%
B^{(k+1)}\subseteq M$, but $B^{(k)}\not \subseteq M$. Clearly $L=M+B^{(k)}$
and $B^{(k)}\cap M$ is an ideal of $L$, so $B^{(k)}\cap M\subseteq M_L$. It
follows that $M$ is a $c$-ideal and hence a weak $c$-ideal of $L$.
\end{proof}

\begin{corollary}\label{c:max}
Let $L$ be a Lie algebra over a field $F$ of characteristic zero. Then $L$
is solvable if and only if every maximal subalgebra of $L$ is a weak $c$%
-ideal of $L$.
\end{corollary}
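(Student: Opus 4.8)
The plan is to derive Corollary~\ref{c:max} directly from the preceding Theorem by choosing the ideal $B$ in the theorem to be $L$ itself. The point is that taking $B=L$ makes the hypothesis ``every maximal subalgebra of $L$ not containing $B$ is a weak $c$-ideal'' coincide with ``every maximal subalgebra of $L$ is a weak $c$-ideal,'' because no maximal subalgebra can contain $L$. The conclusion of the theorem then reads ``$L$ is solvable if and only if $\dots$,'' which is exactly what the corollary asserts.

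So first I would observe that $L$ is trivially an ideal of itself, so the theorem applies with $B=L$. Second, I would note that a maximal subalgebra $M$ of $L$ satisfies $M\neq L$, hence $L\not\subseteq M$; therefore the condition ``$M$ does not contain $B=L$'' holds for \emph{every} maximal subalgebra $M$. Consequently the phrase ``every maximal subalgebra of $L$ not containing $B$ is a weak $c$-ideal of $L$'' is literally the statement ``every maximal subalgebra of $L$ is a weak $c$-ideal of $L$.'' Third, specialising the theorem's biconditional to $B=L$ gives: $L$ is solvable if and only if every maximal subalgebra of $L$ is a weak $c$-ideal of $L$, which is the claim.

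I do not anticipate a genuine obstacle here, since this is a direct specialisation rather than a fresh argument; the only thing to be careful about is confirming that the characteristic-zero hypothesis carries over unchanged (it does, since it is inherited verbatim from the theorem) and that the degenerate reading of ``not containing $L$'' is handled correctly. A one-line proof suffices: apply the theorem with $B=L$.

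\begin{proof}
Apply the Theorem with $B=L$. Since $L$ is an ideal of itself and every maximal subalgebra $M$ of $L$ is proper, we have $L\not\subseteq M$ for every such $M$; thus the maximal subalgebras of $L$ not containing $B=L$ are precisely all the maximal subalgebras of $L$. The Theorem therefore states that $L=B$ is solvable if and only if every maximal subalgebra of $L$ is a weak $c$-ideal of $L$, as required.
\end{proof}
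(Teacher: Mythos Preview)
Your proof is correct and is exactly the intended derivation: the paper states Corollary~\ref{c:max} immediately after the theorem without a separate proof, so the specialisation $B=L$ is precisely what is meant.
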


\bigskip

Unlike the corresponding results for c-ideals, the above two results do not hold in characteristic $p>0$, as the following
example shows.

\begin{example}
Let $L=sl(2)\otimes {\mathcal{O}}_1+1\otimes F(\frac{\partial}{\partial x}+x%
\frac{\partial}{\partial x})$, where ${\mathcal{O}}_1=F[x]$ with $x^p=0$ is
the truncated polynomial algebra in $1$ indeterminate and the ground field, $%
F$, is algebraically closed of characteristic $p>2$. Then $A=sl(2)\otimes {%
\mathcal{O}}_1$ is the unique minimal ideal of $L$. Put $%
S=sl(2)=Fu_{-1}+Fu_0+Fu_1$ with $[u_{-1},u_0]=u_{-1}$, $[u_{-1},u_1]=u_0$, $%
[u_0,u_1]=u_1$ and let $M=(F u_0+Fu_1) \otimes {\mathcal{O}}_1+1\otimes F(%
\frac{\partial}{\partial x}+x\frac{\partial}{\partial x})$. This is a
maximal subalgebra of $L$ which doesn't contain $A$. Suppose that it is a
weak $c$-ideal of $L$. Then there is a subideal $C$ of $L$ such that $L=C+M$
and $C\cap M\subseteq M_L=0$.

Let 
\begin{equation*}
C=C_{0}<C_{1}<\ ...<C_{n}=L
\end{equation*}%
where $C_{j}$ is an ideal of $C_{j+1}$ for each $0\leq j\leq n-1.$ Then $%
A\subseteq C_{n-1}$, so $A=C_{n-1}$ or $C_{n-1}=A+1\otimes F\frac{\partial}{%
\partial x}$. In the latter case it is straightforward to check that $%
C_{n-2}\subseteq A$. In either case, $C$ must be inside a proper ideal of $A$%
, and hence inside $S\oplus O_1^+$, where $O_1^+$ is spanned by $x, x^2,
\ldots, x^{p-1}$. But now $u_{-1}\otimes1 \not \in C+M$. Hence $M$ is not a
weak $c$-ideal of $L$.
\end{example}

\begin{lemma}
\label{l:solvsub} Let $L+U+C$ be a Lie algebra, where $U$ is a solvable
subalgebra of $L$ and $C$ is a subideal of $L$. Then there exists $n_0\in {%
\mathbb{N}}$ such that $L^{(n_0)}\subseteq C$.
\end{lemma}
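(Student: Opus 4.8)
The plan is to induct on the length $n$ of a subideal chain $C=C_0<C_1<\cdots<C_n=L$ witnessing that $C$ is a subideal of $L$ (as in Definition \ref{p:1}, each $C_j$ being an ideal of $C_{j+1}$), where I read the hypothesis as $L=U+C$. The base case $n=0$ is immediate: then $C=L$ and $L^{(1)}=L\subseteq C$, so $n_0=1$ works. The engine of the argument is a single ideal-level observation, which I will also reuse in the inductive step: whenever $D$ is an \emph{ideal} of a Lie algebra $M$ with $M=V+D$ for some solvable subalgebra $V$, the quotient $M/D\cong V/(V\cap D)$ is a homomorphic image of $V$, hence solvable, so that $M^{(m)}\subseteq D$ for some $m\in{\mathbb N}$.

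For the inductive step, assume the result for all subideals admitting a chain of length $n-1$, and let $C$ have a chain of length $n$ as above. The term $C_{n-1}$ is an ideal of $L$, and since $C\subseteq C_{n-1}$ and $L=U+C$ we have $L=U+C_{n-1}$. Applying the ideal-level observation with $M=L$, $D=C_{n-1}$, $V=U$ produces an integer $m$ with $L^{(m)}\subseteq C_{n-1}$.

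Next I pass down one level. By Dedekind's modular law for subspaces, and because $C\subseteq C_{n-1}$, intersecting $L=U+C$ with the subalgebra $C_{n-1}$ gives $C_{n-1}=C_{n-1}\cap(U+C)=(C_{n-1}\cap U)+C$; note that this sum is automatically a subalgebra since it equals $C_{n-1}$. Here $C_{n-1}\cap U$ is a subalgebra of the solvable algebra $U$, hence solvable, while $C=C_0<\cdots<C_{n-1}$ exhibits $C$ as a subideal of $C_{n-1}$ of chain length $n-1$. The induction hypothesis, applied inside the Lie algebra $C_{n-1}$, therefore yields an integer $k$ with $C_{n-1}^{(k)}\subseteq C$. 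Combining the two containments through $A\subseteq B\Rightarrow A^{(j)}\subseteq B^{(j)}$ together with $(L^{(m)})^{(k)}=L^{(m+k-1)}$ gives $L^{(m+k-1)}=(L^{(m)})^{(k)}\subseteq C_{n-1}^{(k)}\subseteq C$, so $n_0=m+k-1$ works.

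I expect the only genuine subtlety to be this reduction step: recognising via the modular law that $C_{n-1}$ again decomposes as a solvable subalgebra plus $C$, so that the inductive hypothesis really does apply one level down. The remaining ingredients — solvability of homomorphic images of $U$ and the bookkeeping of derived-series indices — are routine.
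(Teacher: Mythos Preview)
Your proof is correct and follows essentially the same approach as the paper's: both descend the subideal chain using the fact that each successive quotient is solvable, then compose derived-series bounds. The paper organises this as a direct descent (showing $L^{(n_i)}\subseteq C_i$ step by step using that $C_i/C_{i-1}$ is solvable), while you package the same descent as an induction on chain length; your explicit use of the modular law to write $C_{n-1}=(C_{n-1}\cap U)+C$ is exactly the justification the paper leaves implicit when it asserts that $C_i/C_{i-1}$ is solvable.
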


\begin{proof}
Let $C=C_0< C_1< \ldots < C_k =L$ where $C_i$ is an ideal of $C_{i+1}$ for $%
0\leq i\leq k-1$. Then $L/C_{k-1}$ is solvable and so there exists $n_{k-1}$
such that $L^{(n_{k-1})}\subseteq C_{k-1}$. Suppose that $L^{(n_i)}\subseteq
C_i$ for some $0\leq i\leq k-1$. Now $C_i/C_{i-1}$ is solvable, and so there
is $r_i$ such that $C_i^{(r_i)}\subseteq C_{i-1}$. Hence $%
L^{(n_i+r_i)}=(L^{(n_i)})^{(r_i)}\subseteq C_{i-1}$. Put $n_{i-1}=n_i+r_i$.
The result now follows by induction.
\end{proof}

\begin{theorem}\label{t:solv}
Let $L$ be a Lie algebra over a field $F$ of characteristic zero. Then $L$ has a solvable maximal subalgebra that is a weak c-ideal of $L$ if and only if $L$ is solvable.
\end{theorem}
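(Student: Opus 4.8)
The plan is to handle the two implications separately. Sufficiency is immediate: if $L$ is solvable and nonzero, then any maximal subalgebra $M$ is solvable, being a subalgebra of a solvable algebra, and is a weak c-ideal by Corollary \ref{c:max}; any such $M$ witnesses the claim. The substance lies in the converse, for which I would run a minimal-counterexample argument mirroring the proof of the first theorem of this section.

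So suppose $M$ is a solvable maximal subalgebra that is a weak c-ideal, say $L = M + C$ with $C$ a subideal and $M \cap C \le M_L$, and let $L$ be a counterexample of least dimension. The first step is to reduce to the core-free case $M_L = 0$: if $M_L \neq 0$, then in $L/M_L$ the image $M/M_L$ is again a solvable maximal subalgebra and, by the quotient part of the basic properties lemma, a weak c-ideal; since $M_L$ is solvable, $L/M_L$ would be a non-solvable counterexample of smaller dimension. Hence $M_L = 0$, and then $M \cap C = 0$.

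Next I would examine a minimal ideal $A$ of $L$. If $A$ is abelian, then since $M_L = 0$ forces $A \not\subseteq M$ we have $L = M + A$, and using $[A, M\cap A] = 0$ one checks that $M \cap A$ is an ideal of $L$ contained in $M$, so $M \cap A \le M_L = 0$; then $L = M \oplus A$ with $L/A \cong M$ solvable and $A$ abelian, making $L$ solvable, a contradiction. Thus every minimal ideal is non-abelian and so, over a field of characteristic zero, semisimple and in particular perfect. Applying Lemma \ref{l:solvsub} to $L = M + C$ yields $L^{(n_0)} \subseteq C$ for some $n_0$; perfectness gives $A = A^{(n_0)} \subseteq L^{(n_0)} \subseteq C$, so $M \cap A \subseteq M \cap C = 0$ and again $L = M \oplus A$ with $L/A \cong M$ solvable. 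Since $A$ is a semisimple ideal in characteristic zero, $L = A \oplus C_L(A)$, whence $C_L(A) \cong L/A \cong M$ is solvable; were $C_L(A) \neq 0$ it would contain a minimal ideal of $L$ that is solvable and hence abelian, contradicting the previous sentence. Therefore $C_L(A) = 0$ and $L = A$ is simple. Finally, $C$ is a subideal of the simple algebra $L$, so $C = 0$ or $C = L$; as $L = M + C$ with $M$ proper we must have $C = L$, giving $M = M \cap C = 0$, which is impossible for a maximal subalgebra. This contradiction proves that $L$ is solvable.

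The main obstacle, and the reason for the characteristic-zero hypothesis, is the structural input in the non-abelian case: that a non-abelian minimal ideal is semisimple and that a semisimple ideal is complemented by its centralizer, so that the socle-plus-solvable-complement configuration collapses to $L$ simple. The Example shows this genuinely fails in characteristic $p$. The endgame can equivalently be phrased, exactly as in the first theorem, by observing that $M_L = 0$ makes $L$ primitive of type $2$ with $C_L(A) = 0$ and then invoking Lemma \ref{l:stewart}.
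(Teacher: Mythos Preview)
Your argument is correct, but it takes a considerably longer route than the paper's. After the common reduction to $M_L=0$ (which you carry out exactly as the paper does), you analyse a minimal ideal $A$, split into the abelian and non-abelian cases, invoke the characteristic-zero fact that a semisimple ideal satisfies $L=A\oplus C_L(A)$, and then whittle things down to $L$ simple before extracting the final contradiction from the subideal $C$. The paper instead observes in one stroke that the solvable radical $R$ must lie in $M$ (otherwise $L=M+R$ and $L/R\cong M/(M\cap R)$ would be both solvable and semisimple, forcing $L=R$), hence $R\le M_L=0$ and $L$ is semisimple; then Lemma~\ref{l:solvsub} gives $L=L^{(n_0)}\subseteq K$, contradicting $K\neq L$ (since $K=L$ would force $M=M\cap K=0$). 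Your detour through minimal ideals, $\mathrm{Der}(A)=\mathrm{ad}(A)$, and centralizer complements is valid and self-contained, but the radical argument reaches semisimplicity of $L$ in a single line and avoids the case analysis entirely; both approaches ultimately hinge on Lemma~\ref{l:solvsub}.
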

\begin{proof} Suppose first that $L$ has a solvable maximal subalgebra $M$ that is a weak c-ideal of $L$. We show that $L$ is solvable. Let $L$ be a minimal counter-example. Then there is a subideal $K$ of $L$ such that $L = M + K$ and $M \cap K \leq M_L$. If $M_L\neq 0$ then $L/M_L$ is solvable, by the minimality assumption, and $M_L$ is solvable, whence $L$ is solvable, a contradiction. It follows that $M_L=0$ and $L = M \dot{+} K$. If $R$ is the solvable radical of $L$ then $R \leq M_L = 0$, so $L$ is semisimple. But now, for all $n\geq 1$, $L=L^{(n)}\leq K \neq L$,by Lemma \ref{l:solvsub}, a contradiction. The result follows.
\par
The converse follows from Corollary \ref{c:max}.
\end{proof}

\begin{theorem}
Let $L$ be a Lie algebra over a field of characteristic zero such that all
maximal nilpotent subalgebras are weak $c$-ideals of $L$. Then $L$ is
solvable.
\end{theorem}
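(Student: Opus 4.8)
The plan is to reduce the whole statement to a single Cartan subalgebra and to play its weak c-ideal complement off against the solvable radical. Since $F$ has characteristic zero, $L$ has a Cartan subalgebra $H$, and $H$ is a maximal nilpotent subalgebra of $L$ (a proper subalgebra of a nilpotent algebra is properly contained in its normaliser, so a self-normalising nilpotent subalgebra cannot be enlarged to a larger nilpotent one). Hence, by hypothesis, $H$ is a weak $c$-ideal: there is a subideal $C$ of $L$ with $L = H + C$ and $H \cap C \leq H_L$. First I would record that $H_L$, being an ideal of $L$ contained in the nilpotent subalgebra $H$, is a nilpotent ideal, so $H_L \leq R$, where $R$ denotes the solvable radical of $L$.

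Next I would apply Lemma~\ref{l:solvsub} directly, taking the solvable subalgebra there to be $H$ itself: from $L = H + C$ with $H$ solvable and $C$ a subideal we obtain $n_0$ with $L^{(n_0)} \leq C$. Choosing $n_0$ large enough that the derived series has stabilised, write $D = L^{(n_0)} = L^{(\infty)}$, a perfect characteristic ideal of $L$ with $D \leq C$. Combining with the core condition gives
\[
H \cap D \leq H \cap C \leq H_L \leq R ,
\]
so $H \cap D \leq R \cap D$, which is the solvable radical of $D$.

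The heart of the argument is then to turn this into a statement about the semisimple quotient $L/R$. Suppose, for contradiction, that $L$ is not solvable, so $L/R \neq 0$ is semisimple. Let $\pi : L \to L/R$ be the projection. Because $L/R$ is perfect we have $\pi(D) = (L/R)^{(\infty)} = L/R$, so $\pi$ restricts to a surjection $D \to L/R$ with kernel $R \cap D = R(D)$. Now $D$ is an ideal of $L$, so $H \cap D$ is a Cartan subalgebra of $D$; under the induced surjection $D \to D/R(D) \cong L/R$ it must map onto a Cartan subalgebra of the nonzero semisimple algebra $L/R$, which is necessarily nonzero. But $H \cap D \leq R = \ker \pi$ forces $\pi(H \cap D) = 0$. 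This contradiction shows $L/R = 0$, i.e. $L = R$ is solvable.

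The main obstacle, and the step I would be most careful about, is this last transfer to the quotient: it relies on the two standard characteristic-zero facts that $H \cap I$ is a Cartan subalgebra of an ideal $I$ and that a surjective homomorphism carries Cartan subalgebras onto Cartan subalgebras. These are exactly what converts the inequality $H \cap C \leq H_L$ (which on its own is harmless) into the genuine obstruction ``$\pi(H \cap D) = 0$'' versus ``a nonzero semisimple algebra has nonzero Cartan subalgebras''. Everything else --- existence of $H$, its maximality among nilpotent subalgebras, and $H_L \leq R$ --- is routine. I note that the argument in fact uses only that one Cartan subalgebra is a weak $c$-ideal, so the hypothesis could be weakened accordingly.
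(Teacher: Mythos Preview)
Your argument is correct, but it is not the paper's. The paper argues via the Levi decomposition: writing $L=R\oplus S$ with $S\neq 0$, it picks a maximal nilpotent subalgebra $B$ of $S$, enlarges it to a maximal nilpotent $U$ of $L$, and applies the weak $c$-ideal hypothesis to $U$. From $L=U+C$ and Lemma~\ref{l:solvsub} one gets $S\subseteq C$, hence $B\subseteq U\cap C\subseteq U_L$; then $S\cap U_L$ is a nonzero ideal of $S$ (so semisimple) sitting inside the nilpotent algebra $U$, a contradiction. You instead pick a Cartan subalgebra $H$ of $L$, observe that it is maximal nilpotent, and use the characteristic-zero facts that $H\cap D$ is Cartan in the ideal $D=L^{(\infty)}$ and that Cartans push forward under surjections, to force a zero Cartan subalgebra in the nonzero semisimple quotient $L/R$. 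The paper's route is more elementary in that it avoids the ``Cartan intersects an ideal in a Cartan'' theorem; your route, on the other hand, uses only a single Cartan subalgebra and thereby simultaneously yields (indeed slightly sharpens) Theorem~\ref{t:cart}, which the paper proves separately by constructing a particular Cartan subalgebra adapted to the Levi decomposition via \cite{dix}.
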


\begin{proof}
Suppose that $L$ is not solvable but that all maximal nilpotent subalgebras
of $L$ are weak $c$-ideals of $L$. Let $L=R\oplus S$ be the Levi
decomposition of $L$, where $S\neq 0$. Let $B$ be a maximal nilpotent
subalgebra of $S$ and $U$ be a maximal nilpotent subalgebra of $L$
containing it. Then there is a subideal $C$ of $L$ such that $L=U+C$ and $%
U\cap C\subseteq U_L$. It follows from Lemma \ref{l:solvsub} that $%
S=S^{(n_0)}\subseteq L^{(n_0)}\subseteq C$, and so $B\subseteq U\cap
C\subseteq U_L$, whence $S\cap U_L\neq 0$. But $S\cap U_L$ is an ideal of $S$
and so is semisimple. Since $U$ is nilpotent this is a contradiction.
\end{proof}

\begin{theorem}
\label{t:cart} Let $L$ be a Lie algebra, over a field $F$ of characteristic
zero, in which every Cartan subalgebra of $L$ is a weak c-ideal of $L$. Then 
$L$ is solvable.
\end{theorem}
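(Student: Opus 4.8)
The plan is to prove that if every Cartan subalgebra of $L$ is a weak c-ideal of $L$, then $L$ is solvable, by following the template of the preceding theorem on maximal nilpotent subalgebras and exploiting the Levi decomposition. I would argue by contradiction: suppose $L$ is not solvable, and write $L = R \oplus S$ for the Levi decomposition, where $R$ is the solvable radical and $S$ is a nonzero semisimple Levi factor. The key structural fact I intend to use is that Cartan subalgebras of $L$ are nilpotent and self-normalising, and that in characteristic zero they interact well with the Levi decomposition: a Cartan subalgebra $H$ of $L$ projects onto a Cartan subalgebra of $S$ (or one may build a Cartan subalgebra of $L$ from a Cartan subalgebra of $S$ together with suitable elements of $R$).

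First I would fix a Cartan subalgebra $H_S$ of the semisimple algebra $S$ and extend it to a Cartan subalgebra $H$ of $L$. By hypothesis $H$ is a weak c-ideal, so there is a subideal $C$ of $L$ with $L = H + C$ and $H \cap C \subseteq H_L$. Since $H$ is nilpotent and $C$ is a subideal with $L = H + C$, Lemma \ref{l:solvsub} (applied with the solvable, indeed nilpotent, subalgebra $H$ playing the role of $U$) yields an $n_0$ with $L^{(n_0)} \subseteq C$. Because $S$ is semisimple we have $S = S^{(n_0)} \subseteq L^{(n_0)} \subseteq C$, so in particular $H_S \subseteq S \subseteq C$. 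As $H_S \subseteq H$ as well, we get $H_S \subseteq H \cap C \subseteq H_L$.

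The contradiction should then come from analysing $H_L$. Since $H_L$ is an ideal of $L$ contained in the nilpotent subalgebra $H$, it is a nilpotent ideal of $L$, hence contained in the nilradical and in particular in the solvable radical $R$. On the other hand we have just shown $H_S \subseteq H_L$, and $H_S$ is a nonzero subalgebra of the semisimple part $S$ (nonzero because $S \neq 0$ forces its Cartan subalgebra to be nonzero). Thus $H_S \subseteq H_L \cap S \subseteq R \cap S = 0$, forcing $H_S = 0$, which contradicts $S \neq 0$. This mirrors the semisimplicity argument in the previous proof, where $S \cap U_L$ was shown to be both semisimple (as an ideal of $S$) and nilpotent (inside a nilpotent subalgebra).

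The main obstacle I anticipate is the very first structural step: justifying cleanly that a Cartan subalgebra of $L$ restricts to or contains a Cartan subalgebra of the Levi factor $S$, so that $S = S^{(n_0)}$ can be related to $C$ and a nonzero piece of $S$ can be trapped inside $H_L$. In characteristic zero this follows from the conjugacy and lifting theory of Cartan subalgebras, but one must be careful that the chosen $H$ genuinely meets $S$ in something nonzero; the semidirect structure $L = R \oplus S$ and the fact that Cartan subalgebras cover the semisimple quotient $L/R \cong S$ should supply this. Once that is in hand, the remainder is a direct application of Lemma \ref{l:solvsub} together with the radical-versus-semisimple dichotomy already used above.
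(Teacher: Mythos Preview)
Your proposal is correct and follows essentially the same line as the paper: assume a nonzero Levi factor $S$, build a Cartan subalgebra of $L$ containing a Cartan subalgebra of $S$, apply Lemma~\ref{l:solvsub} to trap $S$ inside the subideal, and then derive the contradiction from the fact that the core of a nilpotent subalgebra is a nilpotent ideal, hence lies in the radical and meets $S$ trivially. The one point you flag as an obstacle---producing a Cartan subalgebra of $L$ that genuinely contains a Cartan subalgebra $H$ of $S$---is exactly what the paper handles by citing Dixmier's result that $H+B$ is a Cartan subalgebra of $L$ when $B$ is a Cartan subalgebra of the centralizer of $H$ in the radical; once you insert that reference, your argument coincides with the paper's.
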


\begin{proof}
Suppose that every Cartan subalgebra of $L$ is a weak c-ideal of $L$, and
that $L$ has a non-zero Levi factor $S$. Let $H$ be a Cartan subalgebra of $%
S $ and let $B$ be a Cartan subalgebra of its centralizer in the solvable
radical of $L$. Then $C = H + B$ is a Cartan subalgebra of $L$ (see \cite%
{dix}) and there is a subideal $K$ of $L$ such that $L = C + K$ and $C \cap
K \leq C_L$. Now there is an $r \geq 2$ such that $L^{(r)} \leq K$, by Lemma %
\ref{l:solvsub}. But $S \leq L^{(r)} \leq K$, so $C \cap S \leq C \cap K
\leq C_L$ giving $C \cap S \leq C_L \cap S = 0$, a contradiction. It follows
that $S = 0$ and hence that $L$ is solvable.
\end{proof}

\section{\textbf{SOME CHARACTERISATIONS OF SUPERSOLVABLE ALGEBRAS}}

The following is proved in \cite[Lemma 4.1]{c-ideal}

\begin{lemma}
\label{l:maxnilp} Let $L$ be a Lie algebra over any field $F$, let $A$ be an
ideal of $L$ and let $U/A$ be a maximal nilpotent subalgebra of $L/A$. Then $%
U = C+A$, where $C$ is a maximal nilpotent subalgebra of $L$.
\end{lemma}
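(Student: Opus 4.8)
The plan is to produce the required maximal nilpotent subalgebra $C$ as a Cartan subalgebra of $U$ itself. Note first that, since $A$ is an ideal of $L$ and $U/A$ is a subalgebra of $L/A$, the preimage $U$ is a subalgebra of $L$ containing $A$, and $U/A$ is nilpotent. As $L$, and hence $U$, is finite-dimensional over an arbitrary field, $U$ possesses a Cartan subalgebra $C$; this $C$ is my candidate. I then have two things to verify: that $C + A = U$, and that $C$ is maximal nilpotent not merely in $U$ but in the whole of $L$.

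For the first, I would apply the standard fact that the image of a Cartan subalgebra under a surjective homomorphism is again a Cartan subalgebra. Applying this to the canonical projection $U \to U/A$, the image $(C+A)/A$ is a Cartan subalgebra of $U/A$. But $U/A$ is nilpotent, and a nilpotent Lie algebra is its own unique Cartan subalgebra (a proper subalgebra of a nilpotent algebra never equals its own idealizer). Hence $(C+A)/A = U/A$, and since both $C+A$ and $U$ contain $A$ this forces $C + A = U$.

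For the second, suppose $N$ is a nilpotent subalgebra of $L$ with $C \leq N$; I want to conclude $N = C$. Passing to $L/A$, the subalgebra $(N+A)/A \cong N/(N\cap A)$ is nilpotent and contains $(C+A)/A = U/A$. Since $U/A$ is a maximal nilpotent subalgebra of $L/A$, this yields $(N+A)/A = U/A$, i.e. $N + A = U$, whence $N \leq U$. As $C$ is a Cartan subalgebra of $U$ it is maximal among the nilpotent subalgebras of $U$, so $C \leq N \leq U$ with $N$ nilpotent forces $N = C$. Thus $C$ is maximal nilpotent in $L$, and $U = C + A$ as required.

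The step I expect to carry the weight is the first verification, and specifically the assertion that Cartan subalgebras exist and that the epimorphic image of a Cartan subalgebra is Cartan over an arbitrary base field; in characteristic zero these are routine, but since the lemma is stated for any $F$ I would want to invoke the general existence theorem for Cartan subalgebras of finite-dimensional Lie algebras together with the corresponding epimorphic-image result, rather than the regular-element construction, which needs an infinite field. The maximality argument in the third paragraph is then purely formal, relying only on the maximality of $U/A$ in $L/A$ and on a Cartan subalgebra being maximal nilpotent in $U$.
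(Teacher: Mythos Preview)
The paper does not actually supply a proof of this lemma; it merely cites \cite[Lemma 4.1]{c-ideal}. Your argument is correct and is, in fact, essentially the standard proof that appears in that reference: one takes a Cartan subalgebra $C$ of $U$, uses that $(C+A)/A$ is Cartan in the nilpotent algebra $U/A$ to get $C+A=U$, and then uses self-normalisation of $C$ in $U$ to deduce that $C$ is maximal nilpotent in all of $L$.

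Two brief remarks on the points you yourself flagged. First, your caution about the base field is well placed but the needed facts are available: Barnes showed that every finite-dimensional Lie algebra over an arbitrary field has a Cartan subalgebra, and the statement that an epimorphic image of a Cartan subalgebra is again Cartan follows from the Fitting-null characterisation $C=L_0(C)$ and does not require an infinite field. Second, the assertion that a Cartan subalgebra is maximal nilpotent (used in your third paragraph) is exactly the argument you sketch in the second paragraph run inside $N$: if $C\lneq N$ with $N$ nilpotent, then $N_N(C)\supsetneq C$, contradicting $N_U(C)=C$. So nothing further is needed there.
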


We will also need the following  result.

\begin{lemma}\label{l:min}  Let $L$ be a Lie algebra over any field $F$ and suppose that $L=B+K$,
where $B$ is a nilpotent subalgebra and $K$ is a subideal of $L$. Then there exists $s\in {\mathbb {N}}$
such that $L^s\subseteq K$. Moreover, if $A$ is a minimal ideal of $L$ then either $A\subseteq K$ or
$[L,A]=0$.
\end{lemma}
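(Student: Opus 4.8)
The plan is to reduce the first assertion to the statement $L^{\infty}\subseteq K$, where $L^{\infty}=\bigcap_{j\geq 1}L^{j}$ is the nilpotent residual of $L$. Since $L$ is finite-dimensional the descending chain $L\supseteq L^{2}\supseteq\cdots$ stabilises, so $L^{\infty}=L^{s}$ for some $s$ and the two formulations are equivalent. I would then prove $L^{\infty}\subseteq K$ by induction on the length $k$ of a subideal chain $K=K_{0}\triangleleft K_{1}\triangleleft\cdots\triangleleft K_{k}=L$, the case $k=0$ (where $K=L$) being trivial.

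For the inductive step, write $H=K_{k-1}$, so that $H\triangleleft L$ and $L=B+H$ (because $K\subseteq H$). Then $L/H\cong B/(B\cap H)$ is nilpotent, and the first key point is that the nilpotent residual dies in every nilpotent quotient: the image $J=(L^{\infty}+H)/H$ is an ideal of the nilpotent algebra $L/H$ with $[L/H,J]=J$ (using $[L,L^{\infty}]=L^{\infty}$ at stabilisation), and iterating $J=[L/H,J]\subseteq (L/H)^{i}\to 0$ forces $J=0$. Hence $L^{\infty}\subseteq H$. The modular law now gives $H=(B\cap H)+K$, with $B\cap H$ nilpotent and $K$ a subideal of $H$ of chain length $k-1$, so by the induction hypothesis $H^{\infty}\subseteq K$.

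The delicate point is the descent from $L^{\infty}\subseteq H$ and $H^{\infty}\subseteq K$ to $L^{\infty}\subseteq K$. For the derived series this step is immediate because $(L^{(n)})^{(m)}=L^{(n+m)}$, which is exactly what makes Lemma \ref{l:solvsub} routine; the lower central series satisfies no analogous composition law, and this is where I expect the main obstacle. The route I would take is to show that $L^{\infty}$ is also perfect with respect to $H$, i.e. $[H,L^{\infty}]=L^{\infty}$; granting this, $L^{\infty}=[H,L^{\infty}]\subseteq[H,H]=H^{2}$, and iterating $L^{\infty}=[H,L^{\infty}]\subseteq[H,H^{j}]=H^{j+1}$ gives $L^{\infty}\subseteq\bigcap_{j}H^{j}=H^{\infty}\subseteq K$. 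To establish $[H,L^{\infty}]=L^{\infty}$ I would set $W=[H,L^{\infty}]\triangleleft L$ and analyse the $L$-module $M=L^{\infty}/W$: it carries a trivial $H$-action, hence is a module for the nilpotent algebra $L/H\cong B/(B\cap H)$ satisfying $[L/H,M]=M$, and I would exploit the nilpotency of $B$ through a weight-space decomposition of the $B$-action, together with the fact that $M$ is an adjoint subquotient of $L$, to force $M=0$.

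The second assertion then follows quickly from the first. Since $A$ is a minimal ideal and $[L,A]$ is an ideal of $L$ contained in $A$, minimality gives $[L,A]=0$ or $[L,A]=A$. In the first case we are in the second alternative. In the second case $A=[L,A]\subseteq L^{2}$, and inductively $A\subseteq L^{j}$ implies $A=[L,A]\subseteq[L,L^{j}]=L^{j+1}$; hence $A\subseteq\bigcap_{j}L^{j}=L^{\infty}=L^{s}\subseteq K$ by the first part, placing us in the first alternative.
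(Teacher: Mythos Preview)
Your instinct that the descent step $L^{\infty}\subseteq H,\ H^{\infty}\subseteq K\ \Rightarrow\ L^{\infty}\subseteq K$ is the crux is exactly right, and the obstacle there is fatal rather than merely technical. The route via $M=L^{\infty}/[H,L^{\infty}]=0$ cannot be completed: nilpotency of $B$ as an abstract Lie algebra says nothing about nilpotency of $\mathrm{ad}_{L}B$ on an adjoint subquotient, so neither a weight--space argument nor the ``adjoint subquotient'' observation forces $M=0$. Concretely, take $L$ with basis $a,b,v_{1},v_{2}$ and nonzero brackets $[a,v_{i}]=v_{i}$ ($i=1,2$) and $[b,v_{1}]=v_{2}$; set $B=Fa+Fb$ (abelian, hence nilpotent), $H=Fb+Fv_{1}+Fv_{2}\triangleleft L$, and $K=F(b+v_{1})+Fv_{2}\triangleleft H$. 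Then $K$ is a length--two subideal of $L$ which is not an ideal (since $[a,b+v_{1}]=v_{1}\notin K$), one has $L=B+K$, yet $L^{s}=Fv_{1}+Fv_{2}$ for every $s\geq 2$, and this is not contained in $K$. Here $M\cong F\bar v_{1}$ with $\bar a$ acting as the identity, so $[L/H,M]=M$ while $M\neq 0$.

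For comparison, the paper's argument is a two--line sketch: it records that $L(\mathrm{ad}\,K)^{r}\subseteq K$ for $r$ the subideal length, and then asserts that nilpotency of $B$ yields $(B+K)^{s}\subseteq K$ for large $s$. This compresses precisely the same gap---nilpotency of $B$ bounds iterated brackets \emph{inside} $B$, not the action of $\mathrm{ad}\,B$ on all of $L$---and the example above is a counterexample to the first assertion of the lemma itself, not just to a particular proof. Your derivation of the minimal--ideal clause from the first assertion is correct and coincides with the paper's, but both rest on a claim that fails as stated.
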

\begin{proof}  Since $K$ is a subideal of $L$, there exists $
r\in {\mathbb{N}}$ such that $L$\thinspace (ad $K)^{r}\subseteq K$.
As $B$ is nilpotent, there exists $s\in {\mathbb{N}}$ such that $
L^{s}=(B+K)^{s}\subseteq K$. 
\par

Now $[L,A]=A$ or $[L,A]=0$ and the former
implies that $A\subseteq L^{s}\subseteq K$.
\end{proof}

\begin{lemma}\label{l:fac}
\label{l:reduce} Let $L$ be a Lie algebra, over any field $F$, in which
every maximal subalgebra of each maximal nilpotent subalgebra of $L$ is a
weak c-ideal of $L$, and let $A$ be a minimal abelian ideal of $L$. Then
every maximal subalgebra of each maximal nilpotent subalgebra of $L/A$ is a
weak c-ideal of $L/A$.
\end{lemma}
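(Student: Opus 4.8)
The plan is to take an arbitrary maximal nilpotent subalgebra $U/A$ of $L/A$ together with an arbitrary maximal subalgebra $N/A$ of $U/A$, and to show that $N/A$ is a weak c-ideal of $L/A$. By Lemma \ref{l:maxnilp} I may write $U = C + A$ where $C$ is a maximal nilpotent subalgebra of $L$. Setting $D = N \cap C$ and using the modular law (valid since $A \leq N$) gives $N = (N \cap C) + A = D + A$; moreover, passing to $C/(C \cap A) \cong U/A$ one checks that $D$ is a maximal subalgebra of $C$. Thus $D$ is a maximal subalgebra of a maximal nilpotent subalgebra of $L$, so by hypothesis $D$ is a weak c-ideal of $L$: there is a subideal $K$ of $L$ with $L = D + K$ and $D \cap K \leq D_L$.

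The key reduction is that, since $A$ is an ideal of $L$ contained in $N$, the quotient property of weak c-ideals (Lemma 2.1(4)) shows it is enough to prove that $N = D + A$ is itself a weak c-ideal of $L$. To locate the appropriate subideal complement I would apply Lemma \ref{l:min} to the decomposition $L = D + K$: here $D$ is nilpotent (being a subalgebra of the nilpotent algebra $C$) and $K$ is a subideal, so the lemma yields the dichotomy that either $A \subseteq K$ or $[L, A] = 0$.

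In the first case, $A \subseteq K$, I would simply retain $K$ as a complement for $N$. Then $L = D + K = N + K$, and by the modular law $N \cap K = (D + A) \cap K = (D \cap K) + A \leq D_L + A$. Since $D_L + A$ is a sum of two ideals of $L$ lying inside $N$, it is contained in $N_L$; hence $N \cap K \leq N_L$ and $N$ is a weak c-ideal of $L$.

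The second case, $[L,A] = 0$, is where I expect the only real subtlety: a naive attempt to use $K + A$ as a complement produces an intersection with $N$ that is one dimension too large. The resolution is that centrality of $A$ collapses the situation. Since $[A, C] = 0$ one computes $(C+A)^n = C^n$ for all $n \geq 2$, so $C + A$ is nilpotent; as $C$ is maximal nilpotent this forces $C + A = C$, that is $A \leq C$, whence $A \leq N \cap C = D$ and $N = D$. Thus $N$ is already the weak c-ideal $D$. In either case $N$ is a weak c-ideal of $L$, and Lemma 2.1(4) then gives that $N/A$ is a weak c-ideal of $L/A$, which completes the argument.
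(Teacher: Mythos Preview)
Your proof is correct and follows essentially the same route as the paper's: the same lifting via Lemma~\ref{l:maxnilp}, the same identification $N=D+A$ with $D$ maximal in $C$, and the same dichotomy from Lemma~\ref{l:min} splitting into the cases $A\subseteq K$ and $[L,A]=0$, the latter forcing $A\leq C$ and hence $N=D$. The only cosmetic difference is that you package the conclusion as ``$N$ is a weak c-ideal of $L$, then apply Lemma~2.1(4)'', whereas the paper carries out the quotient computation in $L/A$ directly; the two are equivalent.
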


\begin{proof}
Suppose that $U/A$ is a maximal nilpotent subalgebra of $L/A$. Then $U = C +
A$ where $C$ is a maximal nilpotent subalgebra of $L$ by Lemma \ref%
{l:maxnilp}. Let $B/A$ be a maximal subalgebra of $U/A$. Then $B = B \cap
(C+A) = B \cap C + A = D + A$ where $D$ is a maximal subalgebra of $C$ with $%
B \cap C \leq D$. Now $D$ is a weak c-ideal of $L$ so there is a subideal $K$
of $L$ with $L = D+K$ and $D \cap K \leq D_L$.

If $A \leq K$ we have $$\frac{L}{A} = \frac{D+K}{A }= \frac{D+A}{A} + \frac{K}{A} = \frac{B}{A} + \frac{K}{A},$$
and $$\frac{B}{A} \cap \frac{K}{A} = \frac{B \cap K}{A} = \frac{(D+A) \cap K}{A} = \frac{D \cap K + A}{A}
\leq \frac{D_L + A}{A} \leq \left(\frac{B}{A}\right)_{L/A}.$$

So suppose that $A\not\leq K$. Then Lemma \ref{l:min} shows that $[L,A]=0$.
It follows that $A\leq C$ and $B=D$. We have $L=B+K$ and $B\cap K\leq B_{L}$%
, so $$\frac{L}{A}=\frac{B}{A}+\frac{K+A}{A}$$ and $$\frac{B}{A}\cap \frac{K+A}{A}=\frac{B\cap (K+A)}{A}=\frac{B\cap
K+A}{A}\leq \frac{B_{L}+A}{A}\leq \left(\frac{B}{A}\right)_{L/A}.$$
\end{proof}

\begin{lemma}\label{l:one}
 Let $L$ be a Lie algebra over any field $F$, in which every maximal nilpotent subalgebra of $L$ is a
weak c-ideal of $L,$ and suppose that $A$ is a minimal abelian ideal of $L$
and $M$ is a core-free maximal subalgebra of $L.$ Then $A$ is one
dimensional.
\end{lemma}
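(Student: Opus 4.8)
The plan is to reduce to the primitive situation $L = M \oplus A$, and then to exhibit a single maximal subalgebra $B$ of a maximal nilpotent subalgebra $U \supseteq A$ whose weak c-ideal complement is forced to contain all of $A$; this collapses a hyperplane of $A$ to $0$ and hence forces $\dim A = 1$.

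First I would record the structural consequences of the hypotheses. Since $A$ is a nonzero ideal and $M_L = 0$, we cannot have $A \leq M$, so maximality of $M$ gives $L = M + A$. Because $A$ is abelian, $M \cap A$ is an ideal of $L$: indeed $[M, M\cap A] \subseteq M \cap A$ and $[A, M\cap A] = 0$. Hence $M \cap A \leq M_L = 0$ and $L = M \oplus A$. If $[L,A] = 0$ then every one-dimensional subspace of $A$ is an ideal, and minimality forces $\dim A = 1$; so I may assume $[L,A] = A$. In this case I would check that $C_L(A) = A$: the centralizer is an ideal containing $A$, and $D := C_L(A) \cap M$ satisfies $[A,D] = 0$ and $[M,D] \subseteq M \cap C_L(A) = D$, so $D$ is an ideal of $L$ inside $M$; thus $D = 0$ and, by the modular law, $C_L(A) = A$.

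Next I would bring in a maximal nilpotent subalgebra. Since $A$ is nilpotent, choose a maximal nilpotent subalgebra $U$ of $L$ with $A \leq U$, and write $U = (U \cap M) \oplus A$. Because $U$ is nilpotent and $A$ is a nonzero ideal of $U$, we have $[U,A] \subsetneq A$; and since $[A,A]=0$ one computes $[U,U] \cap A = [U,A] \subsetneq A$, so $A \not\leq [U,U]$. As the maximal subalgebras of the nilpotent algebra $U$ are exactly the hyperplanes containing $[U,U]$, I can pick $a \in A \setminus [U,U]$ and a maximal subalgebra $B$ of $U$ with $a \notin B$; then $A \not\leq B$ and, by maximality, $B + A = U$, so that $B \cap A$ is a hyperplane of $A$, i.e. $\dim(B \cap A) = \dim A - 1$.

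Finally I would exploit the hypothesis on $B$. As a maximal subalgebra of the maximal nilpotent subalgebra $U$, $B$ is a weak c-ideal of $L$, so there is a subideal $C$ with $L = B + C$ and $B \cap C \leq B_L$. Since $A \not\leq B$, the ideal $B_L \cap A$ is a proper ideal of $L$ inside the minimal ideal $A$, hence $0$; then $[A, B_L] \subseteq A \cap B_L = 0$ gives $B_L \subseteq C_L(A) = A$, whence $B_L = 0$ and $B \cap C = 0$. Now $B$ is nilpotent and $C$ is a subideal, so Lemma \ref{l:min} applies, and since $[L,A]\neq 0$ it yields $A \leq C$. Therefore $B \cap A \subseteq B \cap C = 0$, which contradicts $\dim(B \cap A) = \dim A - 1$ unless $\dim A = 1$; this completes the argument. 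The main obstacle is the middle step, namely manufacturing a maximal subalgebra $B$ of $U$ that misses $A$: this rests on the nilpotency of the action of $U$ on $A$ (giving $A \not\leq [U,U]$), and it is exactly here that one must apply the weak c-ideal property to a maximal subalgebra \emph{of} a maximal nilpotent subalgebra (rather than merely to the maximal nilpotent subalgebras themselves), since otherwise one cannot separate a hyperplane of $A$ from $B$ and the collapse fails.
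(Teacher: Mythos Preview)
Your proof is correct and follows essentially the same strategy as the paper: pick a maximal nilpotent subalgebra $U\supseteq A$, find a maximal subalgebra $B$ of $U$ with $A\not\leq B$, use the weak $c$-ideal hypothesis to get a subideal complement $K$, show $B_L=0$, and apply Lemma~\ref{l:min} to force $A\leq K$ and hence collapse a hyperplane of $A$.

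The differences are organisational rather than substantive. The paper cites \cite{prim} to get that $A$ is the unique minimal ideal and uses that to deduce $B_L=0$; you instead prove $C_L(A)=A$ directly and use it to squeeze $B_L$ into $A\cap B_L=0$, which is a nice self-contained alternative. The paper also splits into the cases $A=C$ and $A\neq C$, handling the first by a separate argument about the ideal closure $K^L$; your observation that $[U,U]\cap A=[U,A]\subsetneq A$ (so one can always choose a hyperplane $B$ of $U$ missing some $a\in A$) unifies the two cases and makes the argument cleaner. Finally, you are right to flag at the end that the hypothesis actually needed is that every \emph{maximal subalgebra} of each maximal nilpotent subalgebra is a weak $c$-ideal, which is what the paper's own proof uses as well (the lemma statement in the paper is slightly misworded on this point).
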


\begin{proof}
We have that $L=A\dot{+} M$ and $A$ is the unique minimal ideal of $L,$ by
\cite[Theorem 1.1]{prim}. Let $C$ be a maximal nilpotent
subalgebra of $L$ with $A\leq C.$ If $A=C,$ choose $B$ to be a maximal
subalgebra of $A,$ so that $A=B+Fa$ and $B_{L}=0.$ Then $B$ is a weak
c-ideal of $L.$ So there is a subideal of $K$ of $L$ with $L=B+K$ and $B\cap
K\leq B_{L}=0$. Now $L=B+K=B+K^L=K^L$, since $B\leq A\leq K^L$. It follows that
$K=L$, whence $B=0$ and $A=Fa$ is one dimensional.
\par

So suppose
that $C\neq A.$ Then $C=A+M\cap C.$ Let $B$ be a maximal subalgebra of $C$
containing $M\cap C.$ Then $B$ is a weak c-ideal of $L,$ so there is a
subideal $K$ of $L$ with $L=B+K$ and $B\cap K\leq B_{L}.$ If $A\leq
B_{L}\leq B,$ we have $C=A+M\cap C\leq B$, a contradiction. Hence $B_{L}=0$
and $L=B\dot{+} K.$ Now $C=B+C\cap K$ and $B\cap C\cap K=B\cap K=0.$ As $C$
is nilpotent this means that $\dim (C\cap K)=1.$ If $A \subseteq K$ we have that $A\leq C\cap K,$ so $
\dim A=1,$ as required. Otherwise, $[L,A]=0$, by Lemma \ref{l:min}, and again $\dim A=1$.
\end{proof}

We can now prove our main result.

\begin{theorem}\label{t:sup}
Let $L$ be a solvable Lie algebra over any field $F$ in which every maximal subalgebra of each maximal
nilpotent subalgebra of $L$ is a weak c-ideal of $L.$ Then $L$ is
supersolvable.
\end{theorem}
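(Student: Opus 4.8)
The plan is to argue by induction on $\dim L$, taking a minimal counterexample: a solvable $L$ satisfying the hypothesis but not supersolvable, of least dimension. First I would fix a minimal ideal $A$; since $L$ is solvable $A$ is abelian, so Lemma \ref{l:fac} applies and $L/A$ again satisfies the hypothesis, whence by minimality $L/A$ is supersolvable. The working goal then becomes to show that some minimal ideal is one-dimensional, for then lifting a chief series of $L/A$ with one-dimensional factors and adjoining $0<A$ exhibits $L$ as supersolvable, a contradiction. Thus in the counterexample every minimal ideal has dimension at least two.

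Next I would reduce to the monolithic case. Since the supersolvable Lie algebras form a formation (closed under subdirect products), if $L$ had distinct minimal ideals $A_1,A_2$ then $A_1\cap A_2=0$ and $L$ would embed in the supersolvable algebra $L/A_1\oplus L/A_2$, forcing $L$ supersolvable. Hence $L$ has a unique minimal ideal $A$, its monolith, with $\dim A\geq 2$. The key dichotomy is whether $A\subseteq\phi(L)$. If $A\not\subseteq\phi(L)$ then some maximal subalgebra $M$ omits $A$; as $A$ is the monolith this gives $M_L=0$, so $M$ is core-free and $L$ is primitive, and Lemma \ref{l:one} applies directly to give $\dim A=1$, a contradiction. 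So everything reduces to the case $A\subseteq\phi(L)$, i.e. $L$ non-primitive.

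This case is where I expect the real difficulty, and it is essential rather than cosmetic: over a non-closed field supersolvability is genuinely not a saturated formation (for instance the real $3$-dimensional algebra $\mathbb{C}\rtimes\mathbb{R}i$, in which $\mathbb{R}i\subseteq\mathbb{C}$ acts on $\mathbb{C}$ by multiplication, is solvable with its $2$-dimensional monolith inside the Frattini ideal yet is not supersolvable), so the hypothesis must be used to exclude it. My plan is to extract structure and then force a complement. We may assume $[L,A]=A$ (else $A$ is central and one-dimensional). Then $[N,A]$ is an ideal of $L$ inside $A$, and $[N,A]=A$ would give $A\subseteq N^{(k)}=0$; hence $[N,A]=0$ and $A\subseteq Z(N)$, where $N$ is the nilradical. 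Similarly $L'/A$ is nilpotent (as $L/A$ is supersolvable) and $[L',A]$ is again an ideal inside $A$ that cannot equal $A$, so $A\subseteq Z(L')$ and $L'$ is nilpotent. A useful observation is that any nilpotent subalgebra $C\supseteq A$ satisfies $C\subseteq C_L(A)$: for $x\in C$ the operator $\mathrm{ad}\,x$ is nilpotent on $C$, hence on $A$, and a nonzero nilpotent operator would have nonzero kernel, which (the algebra $L/C_L(A)$ acting on the irreducible $A$ being abelian) is a submodule of $A$, forcing $\mathrm{ad}\,x|_A=0$. Thus every maximal nilpotent $C$ with $A\subseteq C$ has $A\subseteq Z(C)$.

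With such a $C$ I would finish as in Lemma \ref{l:one}: choose a maximal subalgebra $B$ of $C$ with $A\not\subseteq B$ (possible precisely when $A\not\subseteq[C,C]=F(C)$, since the maximal subalgebras of the nilpotent $C$ are the hyperplanes containing $[C,C]$). Then $B_L=0$ because $A$ is the monolith, the hypothesis makes $B$ a weak c-ideal, so there is a subideal $K$ with $L=B\dot{+}K$; Lemma \ref{l:min} gives either $[L,A]=0$ (so $A$ is central and $\dim A=1$) or $A\subseteq K$, and then $A\subseteq C\cap K$, which is one-dimensional since $B$ has codimension one in $C$ — a contradiction. The genuine obstacle, and the step I expect to be hardest, is guaranteeing a maximal nilpotent $C\supseteq A$ with $A\not\subseteq[C,C]$. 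When $A$ is itself maximal nilpotent (for example when $N=A$) this is automatic since $[A,A]=0$; the work is to rule out $A\subseteq[C,C]$ for every maximal nilpotent $C\supseteq A$, presumably by passing to the ideal $C_L(A)$, in which $A$ is central, or by using $A\subseteq Z(C)$ to relocate to a better maximal nilpotent subalgebra, and this is the part that must ultimately consume the weak c-ideal hypothesis.
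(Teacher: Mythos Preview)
Your outline matches the paper's proof exactly through the dichotomy on whether $A\subseteq\phi(L)$: minimal counterexample, Lemma~\ref{l:fac} to pass to $L/A$, reduction to a unique minimal ideal, and Lemma~\ref{l:one} to finish the primitive case $A\not\subseteq\phi(L)$. The gap is your treatment of the case $A\subseteq\phi(L)$, which you flag as the hard case; in fact it is the trivial one. Since $L/A$ is supersolvable and $A\subseteq\phi(L)$, the quotient $L/\phi(L)$ is supersolvable, and Barnes' theorem \cite[Theorem~7]{Barnes} then gives that $L$ is supersolvable, over an arbitrary field. Supersolvability \emph{is} a saturated formation for finite-dimensional Lie algebras; the paper simply cites this and moves on.

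Your claimed counterexample is mistaken. In $L=\mathbb{C}\rtimes\mathbb{R}$ with the generator acting on $\mathbb{C}$ as multiplication by $i$, the only two-dimensional subalgebra is $\mathbb{C}$ itself (any $y\notin\mathbb{C}$ has $\mathrm{ad}\,y|_{\mathbb{C}}$ without real eigenvectors), so every line $\mathbb{R}y$ with $y\notin\mathbb{C}$ is a maximal subalgebra and $F(L)=\phi(L)=0$. The two-dimensional monolith is not inside the Frattini ideal; this $L$ is primitive, precisely the situation handled by Lemma~\ref{l:one}, not the non-primitive obstruction you were seeking. Once you invoke Barnes' result, the entire apparatus you sketch for the $A\subseteq\phi(L)$ case (centralizer arguments, $A\subseteq Z(C)$, searching for a maximal nilpotent $C$ with $A\not\subseteq[C,C]$) is unnecessary, and the proof is complete.
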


\begin{proof}
Let $L$ be a minimal counter-example and let $A$ be a minimal abelian ideal
of $L.$ Then $L/A$ satisfies the same hypothesis by Lemma \ref{l:fac}. We thus have
that $L/A$ is supersolvable and it remains to show that $\dim A=1.$

If there is anaother minimal ideal $I$ of $L,$ then 
\begin{equation*}
A\cong \left( A+I\right) /I\leq L/I\text{ }
\end{equation*}
which is supersolvable and so $\dim A=1.$ So we can assume that $A$ is the
unique minimal ideal of $L.$ Also, if $A\leq \phi \left( L\right) ,$ we have
that $L/\phi (L)$ is supersolvable, whence $L$ is supersolvable by
\cite[Theorem 7]{Barnes}. We therefore, further assume that $A\nleq \phi (L).
$ It follows that $L=A\dot{+} M,$ where $M$ is a core-free maximal subalgebra
of $L.$ The result now follows from Lemma \ref{l:one}.
\end{proof}

If $L$ has no one-dimensional maximal nilpotent subalgebras, we can remove
the solvability assumption from the above result provided that $F$ has characteristic zero.

\begin{corollary}\label{c:sup}
Let $L$ be a Lie algebra over a field $F$ of characteristic zero in which
every maximal nilpotent subalgebra has dimension at least two. If every
maximal subalgebra of each maximal nilpotent subalgebra of $L$ is a weak
c-ideal of $L,$ then $L$ is supersolvable.
\end{corollary}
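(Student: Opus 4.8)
The plan is to deduce this from Theorem~\ref{t:sup} by reducing to the solvable case: since a supersolvable algebra is solvable, it suffices to prove that $L$ is solvable, for then the present hypotheses together with Theorem~\ref{t:sup} give supersolvability at once. So I would suppose, for contradiction, that $L$ is not solvable. Working in characteristic zero I can take a Levi decomposition $L = R \oplus S$, where $R$ is the solvable radical and the Levi factor $S$ is semisimple and nonzero; I will use that $R \cap S = 0$ and that $S$ is perfect, so that $S^k = S$ for every $k$.

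Next I would manufacture a maximal subalgebra of a maximal nilpotent subalgebra that meets $S$ nontrivially. Pick any $x \neq 0$ in $S$ and set $H = Fx$, a one-dimensional --- hence abelian, hence nilpotent --- subalgebra of $L$. Extend $H$ to a maximal nilpotent subalgebra $U$ of $L$. Here the dimension hypothesis enters decisively: since $\dim U \geq 2 > 1 = \dim H$, the subalgebra $H$ is \emph{proper} in $U$, so it is contained in some maximal subalgebra $B$ of $U$. This is precisely the step the hypothesis is there to guarantee, and taking $H$ one-dimensional lets me avoid a separate case analysis for the possibility $H = U$.

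Finally I would extract the contradiction. By hypothesis $B$, being a maximal subalgebra of the maximal nilpotent subalgebra $U$, is a weak c-ideal, so there is a subideal $C$ with $L = B + C$ and $B \cap C \leq B_L$. As $B \leq U$ is nilpotent, Lemma~\ref{l:min} supplies an $s$ with $L^s \subseteq C$, and since $S = S^s \subseteq L^s$ we get $S \subseteq C$. On the other hand $B_L$ is an ideal of $L$ contained in the nilpotent algebra $U$, hence is a solvable ideal, so $B_L \leq R$. Now $x \in H \leq B$ and $x \in S \leq C$, whence $x \in B \cap C \leq B_L \leq R$; combined with $x \in S$ this forces $x \in R \cap S = 0$, contradicting $x \neq 0$. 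Hence $S = 0$, $L$ is solvable, and Theorem~\ref{t:sup} completes the proof. I expect the only genuine obstacle to be the reduction itself --- arranging that the chosen maximal subalgebra $B$ simultaneously contains a nonzero element of $S$ and is a weak c-ideal --- after which Lemma~\ref{l:min} and the radical argument are routine.
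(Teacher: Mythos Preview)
Your proof is correct and follows essentially the same route as the paper's: pick a nonzero $x$ in the Levi factor $S$, place it inside a maximal nilpotent subalgebra, use the dimension hypothesis to find a maximal subalgebra $B$ containing $x$, apply the weak c-ideal condition and Lemma~\ref{l:min} to force $S\subseteq C$, and then derive $x\in B_L\cap S=0$. The only cosmetic difference is that the paper bounds $B_L$ by the nilradical $N$ (since $B_L$ is a nilpotent ideal) rather than the solvable radical $R$, and phrases the first half as a statement about arbitrary $x\notin N$ before specialising to $x\in S$; neither change affects the argument.
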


\begin{proof}
Let $N$ be the nilradical of $L,$ and let $x\notin N.$ Then $x\in C$ for
some maximal nilpotent subalgebra $C$ of $L.$ Since $\dim C>1$,
there is a maximal subalgebra $B$ of $C$ with $x\in B.$ Then there is a
subideal $K$ of $L$ such that $L=B+K$ and $B\cap K\subseteq B_{L}\leq
C_{L}\leq N$. Clearly, $x\notin K,$ since otherwise $x\in B\cap K\leq N.$
Moreover, $L^r\subseteq K$ for some $r\in {\mathbb{N}}$, by Lemma \ref{l:min}. 
We have shown that if $x\notin N$ there is a
subideal $K$ of $L$ with $x\notin K$ and $L^r\subseteq K$. 
par

Suppose that $L$ is not solvable. Then there is a semisimple Levi factor $S$ of $L$. Choose
$x\in S$. Then $x\in S = S^r\subseteq K$, a contradiction. Thus $L$ is solvable and the result follows from
Theorem \ref{t:sup}.
\end{proof}

If $L$ has a one-dimensional maximal nilpotent subalgebra, then we can also
remove the solvability assumption from Theorem 4.4., provided that
underlying field $F$ has again characteristic zero and $L$ is not three-dimensional simple.

\begin{corollary}
Let $L$ be a Lie algebra over a field $F$ of characteristic zero. If every
maximal subalgebra of each maximal nilpotent subalgebra of $L$ is a weak $c$%
-ideal of $L,$ then $L$ is supersolvable or three dimensional simple.
\end{corollary}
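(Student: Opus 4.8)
The plan is to induct on $\dim L$ and to separate the argument into the two regimes already disposed of by the preceding results. If every maximal nilpotent subalgebra of $L$ has dimension at least two, then Corollary \ref{c:sup} gives at once that $L$ is supersolvable, so I may assume $L$ has a one-dimensional maximal nilpotent subalgebra $Fx$. This forces $C_L(x)=Fx$, since any $y\in C_L(x)\setminus Fx$ would make $Fx+Fy$ a strictly larger abelian, hence nilpotent, subalgebra. If $L$ is solvable, Theorem \ref{t:sup} gives supersolvability; so the substantive task is to show that a non-solvable $L$ satisfying the hypothesis is three-dimensional simple. Fix a Levi factor $S\neq 0$ and the nilradical $N$.

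The central device is an adaptation of the proof of Corollary \ref{c:sup}. For a fixed nonzero $x\in S$, suppose $\dim C_L(x)\geq 2$. Then $x$ lies in a two-dimensional abelian subalgebra, hence in a maximal nilpotent subalgebra $C$ with $\dim C\geq 2$; since then $\dim C/C^{2}\geq 2$, some maximal subalgebra $B$ of $C$ contains $x$. By hypothesis $B$ is a weak c-ideal, so there is a subideal $K$ with $L=B+K$ and $B\cap K\leq B_{L}\leq N$. Lemma \ref{l:min} yields $L^{s}\subseteq K$ for some $s$, so $x\in S=S^{s}\subseteq L^{s}\subseteq K$; but then $x\in B\cap K\leq N$, and $S\cap N=0$ gives $x=0$, a contradiction. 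Hence $C_{L}(x)=Fx$ for every nonzero $x\in S$. Splitting $C_L(x)=C_R(x)\oplus C_S(x)$ with $x\in C_S(x)$ shows $C_{S}(x)=Fx$ and $C_{R}(x)=0$ for all nonzero $x\in S$; taking $x$ in a Cartan subalgebra $H$ of $S$ gives $H\subseteq C_{S}(x)=Fx$, so $\dim H=1$ and $S$ is three-dimensional simple.

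It remains to rule out $R\neq 0$. Here I would choose a minimal (necessarily abelian) ideal $A$ of $L$ inside $N$, apply Lemma \ref{l:fac} so that $L/A$ inherits the hypothesis, and invoke the induction: $L/A$ cannot be supersolvable (else $L$ would be solvable), so $L/A$ is three-dimensional simple, forcing $R=A$ and $L=A\,\dot{+}\,S$ with $A$ an irreducible $S$-module. Now $C_{A}(x)=C_{R}(x)=0$ makes $\mathrm{ad}(x)|_{A}$ invertible for every nonzero $x\in S$, whence $A$ is itself a \emph{maximal} nilpotent subalgebra: any $M\supsetneq A$ contains some $a+x$ with $x\neq 0$, and $[a+x,A]=A$ makes $M$ non-nilpotent. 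When $\dim A\geq 2$ the module $A$ is nontrivial, so the only ideals of $L$ are $0,A,L$, and therefore the only proper subideals of $L$ are the subspaces of $A$. A maximal subalgebra $B$ of $A$ then has $B_{L}=0$ but admits no subideal complement, since any subideal $K$ with $L=B+K$ must equal $L$, giving $B\cap K=B\neq 0$; this contradicts the hypothesis. The case $\dim A=1$ is impossible because a semisimple $S$ acts trivially on a line, contradicting $C_{A}(x)=0$. Hence $R=0$ and $L=S$ is three-dimensional simple.

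The main obstacle I expect is precisely this final step. The centralizer computation alone eliminates $R\neq 0$ only when $S$ contains a nonzero nilpotent element: in that split case one takes $x$ nilpotent and uses that $\mathrm{ad}(x)|_{R}$ cannot be injective on $R\neq 0$, contradicting $C_{R}(x)=0$. For anisotropic forms such as $su(2)$ there is no such element, and an essentially different argument is needed; the combination of the reduction through Lemma \ref{l:fac} with the scarcity of subideals in $A\,\dot{+}\,S$ is what handles the non-split case uniformly. I would take particular care to verify the classification of subideals of $A\,\dot{+}\,S$ and the claim that $A$ is maximal nilpotent, since these two facts carry the entire contradiction.
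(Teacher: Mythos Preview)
Your argument is correct and takes a genuinely different route from the paper's. Both proofs begin by invoking Corollary~\ref{c:sup} to reduce to the case where $L$ has a one-dimensional maximal nilpotent subalgebra, and both pass to $L/A$ via Lemma~\ref{l:fac}. The divergence is in how the Levi factor is pinned down and how $R\neq 0$ is eliminated. The paper treats the semisimple case first, arguing from the direct-sum decomposition $L=S_1\oplus\cdots\oplus S_n$ that $n=1$ and then asserting that the one-dimensional maximal nilpotent subalgebra is a Cartan subalgebra, so $L$ has rank one; for $R\neq 0$ it quotes Lemma~\ref{l:one} to force $\dim A=1$ and finishes by the impossibility of $S\hookrightarrow \mathrm{Der}(A)$. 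You instead prove directly, via the weak $c$-ideal hypothesis and Lemma~\ref{l:min}, that $C_L(x)=Fx$ for every nonzero $x\in S$, which immediately gives $\mathrm{rank}\,S=1$ and hence $S$ three-dimensional simple without first reducing to the semisimple case; to exclude $R\neq 0$ you show $A$ is itself maximal nilpotent (because $\mathrm{ad}(x)|_A$ is invertible for each nonzero $x\in S$), classify the proper subideals of $A\dot{+}S$ as the subspaces of $A$, and derive a contradiction from the hypothesis applied to a hyperplane of $A$. Your route is more self-contained, bypassing Lemma~\ref{l:one} and the primitive-algebra machinery, and your rank argument is cleaner than the paper's Cartan-subalgebra claim; the paper's proof is shorter because it leans on the lemmas already established.
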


\begin{proof}
If every maximal nilpotent subalgebra of $L$ has dimension at least two,
then $L$ is supersolvable by Corollary \ref{c:sup}. So we need only consider the
case where $L$ has a one-dimensional maximal nilpotent subalgebra say $Fx$.
\par

Suppose first that $L$ is semisimple, so $L=S_{1}\oplus ...\oplus S_{n},$
where $S_{i}$ is a simple ideal of $L$ for $1\leq i\leq n.$ Let $n>1.$ If $
x\in S_{i},$ then choosing $s\in S_{j}$ with $j\neq i,$ we have that $Fx+Fs$
is a two dimensional abelian subalgebra, which contradicts the maximality of 
$Fx.$ If $x\notin S_{i},$ for every $1\leq i\leq n,$ then $x$ has nonzero
projections in at least two of the $S_{k}$'s, say $s_{i}\in S_{i}$ and $
s_{j}\in S_{j}.$ But then $Fx+Fs_{i}$ is a two-dimensional abelian
subalgebra, a contradiction again. It follows that $L$ is simple. But then $Fx$ is a Cartan subalgebra of $L,$
which yields that $L$ has rank one and thus is three dimensional.

So now let $L$ be a minimal-counter example. We have seen that $L$ is not
semisimple, so it has a minimal abelian ideal $A.$ By Lemma \ref{l:fac}, $L/A$ is supersolvable
or three-dimensional simple. In the former case, $L$ is solvable and so is supersolvable, by Theorem \ref{t:sup}.
\par

In the latter case, $L=A\oplus S$ where $S$ is three-dimensional simple, and
so a core-free maximal subalgebra of $L.$ It follows from Lemma \ref{l:one} that $
\dim A=1.$ But now $C_{L}\left( A\right) =A$ or $L.$ In the former case $%
S\cong L/A=L/C_{L}\left( A\right) \cong Inn\left( A\right) ,$ a subalgebra
of $Der\left( A\right) ,$ which is impo\i ssible. Hence $L=A\oplus S,$ where 
$A$ and $S$ are both ideals of $L$ and again $L$ has no one-dimensional
maximal nilpotent subalgebras.
\end{proof}

\section{\textbf{ONE-DIMENSIONAL WEAK C-IDEALS}}

\begin{lemma}\label{l:equiv}
Let $L$ be a Lie algebra over any field $F$. Then the
one-dimensional subalgebra $Fx$ of $L$ is a weak c-ideal of $L$ if and only
if it is a $c$-ideal of $L$
\end{lemma}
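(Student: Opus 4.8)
The plan is to prove the equivalence in both directions. One direction is immediate: part (1) of the first Lemma already establishes that every $c$-ideal of $L$ is a weak $c$-ideal of $L$, so if $Fx$ is a $c$-ideal then it is automatically a weak $c$-ideal. The entire content of the lemma therefore lies in the converse, and that is where I would concentrate the argument.

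So suppose $Fx$ is a weak $c$-ideal of $L$. Then there is a subideal $C$ of $L$ with $L = Fx + C$ and $Fx \cap C \leq (Fx)_L$. To show $Fx$ is a $c$-ideal, I must produce an actual \emph{ideal} $D$ of $L$ (rather than merely a subideal) satisfying $L = Fx + D$ and $Fx \cap D \leq (Fx)_L$. The natural first step is to split on the two possibilities for $Fx \cap C$. Since $Fx$ is one-dimensional, $Fx \cap C$ is either $0$ or all of $Fx$. If $Fx \cap C = Fx$, then $Fx \leq C \leq L = Fx + C = C$, forcing $C = L$; but then the condition $Fx \cap C = Fx \leq (Fx)_L$ forces $Fx = (Fx)_L$, so $Fx$ is itself an ideal of $L$ and hence trivially a $c$-ideal (take $D = L$, or $D$ any complement). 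So the substantive case is $Fx \cap C = 0$, i.e. $L = Fx \dot{+} C$ with $C$ a proper subideal of codimension one.

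The key observation in this remaining case is that a subideal of codimension one is automatically an ideal. Concretely, because $C$ is a subideal there is a chain $C = C_0 \triangleleft C_1 \triangleleft \cdots \triangleleft C_n = L$; since $\dim L - \dim C = 1$, at each step the codimension of $C_j$ in $C_{j+1}$ is $0$ or $1$, and exactly one step has codimension one while the rest are equalities. A subalgebra of codimension one in a Lie algebra that already sits as an ideal one level up can be tracked through the chain, but the cleanest route is: the top nontrivial step gives an ideal $C_{n-1}$ of $L = C_n$ of codimension one containing $C$, and by collapsing equalities $C = C_{n-1}$ is an ideal of $L$. Thus $C$ itself is an ideal of $L$, and setting $D = C$ we have an ideal with $L = Fx + D$ and $Fx \cap D = 0 \leq (Fx)_L$, so $Fx$ is a $c$-ideal of $L$.

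The main obstacle is the codimension-one subideal argument: one must verify carefully that a codimension-one subideal really is an ideal, handling the chain so that the single codimension-one jump occurs at the final step $C_{n-1} \triangleleft C_n = L$ and all lower terms coincide with $C$. This is elementary but is the crux, since it is precisely the place where the weaker subideal hypothesis is upgraded to a genuine ideal, which is what distinguishes $c$-ideals from weak $c$-ideals in general dimensions. I would also double-check the degenerate case $L = Fx$ separately, where the statement holds trivially.
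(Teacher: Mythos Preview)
Your proposal is correct and follows essentially the same approach as the paper: the paper's proof simply observes that since $L=Fx+K$ with $\dim Fx=1$, the subideal $K$ is either $L$ or of codimension one in $L$, hence an ideal, making $Fx$ a $c$-ideal. Your case split on $Fx\cap C$ and your explicit verification that a codimension-one subideal is an ideal are more detailed than what the paper writes, but the underlying argument is the same.
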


\begin{proof}
Let $Fx$ be a weak c-ideal of $L.$ Then there is a subideal $K$ of $L$ such
that $L=Fx+K$ and $Fx\cap K\leq (Fx) _L. $ Since either $K=L$ or $K$ has codimension one in $L$, it is an ideal of $L$
and $Fx$ is a $c$-ideal of $L$.
\end{proof}

We say that $L$ is {\it almost abelian} if $L=L^2\oplus Fx$, where $L^2$ is abelian and
$[x,y]=y$ for all $y\in L^2$. Then the following result follows from Lemma \ref{l:equiv}
and \cite[Theorem 5.2]{c-ideal}.

\begin{theorem}
Let $L$ be a Lie algebra over any field $F$. Then all
one-dimensional subalgebras of $L$ are weak c-ideals of $L$ if and only if:

$(i)$ $L^{3}=0;$ or

$(ii)\ L=A\oplus B,$ where $A$ is an abelian ideal of $L$ and $B$ is an
almost abelian ideal of $L.$
\end{theorem}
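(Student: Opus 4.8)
The plan is simply to transport the known c-ideal characterisation across the equivalence just established, so this is a two-line deduction rather than an argument requiring new ideas. The essential observation is that Lemma~\ref{l:equiv} is stated for an \emph{arbitrary} one-dimensional subalgebra $Fx$ of $L$: it asserts that such a subalgebra is a weak c-ideal of $L$ if and only if it is a c-ideal of $L$, with no constraint whatsoever on the choice of $x$. The first thing I would do is quantify this biconditional over all one-dimensional subalgebras at once, obtaining that \emph{every} one-dimensional subalgebra of $L$ is a weak c-ideal if and only if \emph{every} one-dimensional subalgebra of $L$ is a c-ideal.

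Having reduced the weak c-ideal hypothesis to the corresponding c-ideal hypothesis, I would then invoke \cite[Theorem 5.2]{c-ideal}, which states precisely that all one-dimensional subalgebras of $L$ are c-ideals if and only if $L^{3}=0$ or $L=A\oplus B$ with $A$ an abelian ideal of $L$ and $B$ an almost abelian ideal of $L$. Chaining the two equivalences then yields the theorem: the weak c-ideal condition on all one-dimensional subalgebras is equivalent to the c-ideal condition, which in turn is equivalent to $(i)$ or $(ii)$.

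Since both ingredients are already in hand, there is no substantive obstacle here. The only matter requiring care is bookkeeping: one must confirm that \cite[Theorem 5.2]{c-ideal} is indeed valid over a general field $F$, in agreement with the present hypothesis, and that its two listed cases coincide with $(i)$ and $(ii)$ above, in particular that the notion of \emph{almost abelian} used there is the one recalled immediately before the statement ($L=L^{2}\oplus Fx$ with $L^{2}$ abelian and $[x,y]=y$ for all $y\in L^{2}$). Provided these formulations match verbatim, no further work is needed beyond writing out the two-step chain of equivalences.
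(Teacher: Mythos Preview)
Your proposal is correct and matches the paper's approach exactly: the paper does not even give a separate proof, but simply states that the theorem follows from Lemma~\ref{l:equiv} together with \cite[Theorem~5.2]{c-ideal}. Your bookkeeping checks (general field, matching definition of almost abelian) are appropriate and the two-line deduction you outline is precisely what is intended.
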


\end{document}